\newtheorem{theorem}{Theorem}
\newtheorem{lemma}[theorem]{Lemma}
\newtheorem{proposition}[theorem]{Proposition}
\newtheorem{corollary}[theorem]{Corollary}
\newtheorem{remark}[theorem]{Remark}
\newtheorem{example}[theorem]{Example}
\begin{document}
\title{FP-injectivity of factors of injective modules}
\author{Fran\c{c}ois Couchot}
\address{Laboratoire de Math\'ematiques Nicolas Oresme, CNRS UMR
  6139,
D\'epartement de math\'ematiques et m\'ecanique,
14032 Caen cedex, France}
\email{couchot@unicaen.fr}  
\keywords{chain ring, injective module, FP-injective module, coherent ring, semihereditary ring}
\subjclass[2000]{13F30, 13C11, 16E60}

\begin{abstract} It is shown that a ring is left semihereditary if and only each homomorphic image of its injective hull as left module is FP-injective. It is also proven that a commutative ring $R$ is reduced and arithmetical if and only if $E/U$ if FP-injective for any FP-injective $R$-module $E$ and for any  submodule $U$ of finite Goldie dimension. A characterization of commutative rings for which each module of finite Goldie dimension is of injective dimension at most one is given. Let $R$ be a chain ring and $Z$ its subset of zerodivisors. It is proven that $E/U$ is FP-injective for each FP-injective $R$-module $E$ and each pure polyserial submodule $U$ of $E$ if $R/I$ is complete in its f.c. topology for each ideal $I$ whose the top prime ideal is $Z$. The converse holds if each indecomposable injective module whose the bottom prime ideal is $Z$ contains a pure uniserial submodule. For some chain ring $R$ we show that $E/U$ is FP-injective for any FP-injective module $E$ and any its submodule $U$ of finite Goldie dimension, even if $R$ is not coherent. It follows that any Archimedean chain ring is either coherent or maximal if and only if each factor of any injective module of finite Goldie dimension modulo a pure submodule is injective.
\end{abstract}

\maketitle

\section{Introduction}

It is well known that each factor of a divisible module over an integral domain is divisible. By \cite[Proposition IX.3.4]{FuSa01} an integral domain is {\bf Pr\"ufer} (each ideal is flat) if and only if each  divisible module is FP-injective. So, over any Pr\"ufer domain each factor module of a FP-injective module is FP-injective too. More generally, a ring $R$ is left {\bf hereditary} (each left ideal is projective) if and only if (by \cite[Proposition I.6.2]{CaEi56}) each factor of any injective left $R$-module is injective, a ring $R$ is left {\bf semihereditary} (each finitely generated left ideal is projective) if and only if (by \cite[Theorem 2]{Meg70}) each factor of any FP-injective left $R$-module is FP-injective,
By \cite[Th\'eor\`eme 4]{Cou75} a commutative ring $R$ has {\bf global weak dimension} $\leq 1$ (each ideal is flat) if and only if each finitely cogenerated factor of any finitely cogenerated injective module is FP-injective, and in this case, by using \cite[Th\'eor\`emes 3 et 4]{Cou75} it is possible to show that each factor of any FP-injective module modulo a submodule of finite Goldie dimension is FP-injective. In \cite[Theorem 2.3]{Fac82} there is a characterization of commutative rings for which each factor of any finitely cogenerated injective module is injective. On the other hand, by using \cite[Theorem 3.2]{Ste70} it is not difficult to show that a ring $R$ is left {\bf coherent} (each finitely generated left ideal is finitely presented) if and only if each factor of any FP-injective left $R$-module modulo a pure submodule is FP-injective (each direct limit of a system of FP-injective modules is factor of the direct sum of all FP-injective modules of the system modulo a pure submodule).

\bigskip

In this paper the following two questions are studied:
\begin{itemize}
\item What are the rings $R$ for which $E/U$ is FP-injective for any FP-injective left module $E$ and any submodule $U$ of finite Goldie dimension?
\item What are the rings $R$ for which any left module of finite Goldie dimension is of injective dimension at most one? 
\end{itemize}
A complete answer to these questions is given but only when $R$ is commutative. However, a result in the general case is given by extending Problem 33 posed by Fuchs and Salce in \cite[p. 306]{FuSa01} and solved by Laradji in \cite{Lar05}. 

Then, we examine the following question:
\begin{itemize}
\item  What are the rings $R$ for which $E/U$ is FP-injective for any FP-injective left module $E$ and any pure submodule $U$ of finite Goldie dimension?
\end{itemize}
We study this question uniquely in the case where $R$ is a commutative chain ring, and even in this case, it is not easy to get some interesting results.

\bigskip

In this paper all rings are associative and commutative (except at the beginning of section \ref{S:glo}) with unity and
all modules are unital. First we give some definitions. 

An $R$-module $M$ is said to be \textbf{uniserial} if its set of submodules is totally ordered by inclusion and  $R$ is a \textbf{chain ring}\footnote{we prefer ``chain ring '' to ``valuation ring'' to avoid confusion with ``Manis valuation ring''.} if it is uniserial as $R$-module. In the sequel, if $R$ is a chain ring, we denote by $P$ its maximal ideal, $Z$ its subset of zerodivisors and $Q(=R_Z)$ its quotient ring. Recall that a chain ring $R$ is said to be {\bf Archimedean} if $P$ is the sole non-zero prime ideal.

A module $M$ has \textbf{finite Goldie dimension} if its injective hull is a finite direct sum of indecomposable injective modules. A module $M$ is said to be \textbf{finitely cogenerated} if its injective hull is a finite direct sum of injective hulls of simple modules. The \textbf{f.c. topology} on a module $M$ is the linear topology defined by taking as a basis of neighbourhoods of zero all submodules $G$ for which $M/G$ is finitely cogenerated (see \cite{Vam75}). This topology is always Hausdorff. When $R$ is a chain ring which is not a finitely cogenerated $R$-module, the f.c. topology on $R$ coincides with the $R$-topology which is defined by taking as a basis of neighbourhoods of zero all non-zero principal ideals.
A module $M$ is called \textbf{linearly compact} if any family of cosets having the finite intersection property has nonempty intersection.

A ring $R$ is said to be \textbf{(almost) maximal} if $R/A$ is linearly compact for any (non-zero) proper ideal $A$.

An exact sequence \ $0 \rightarrow F \rightarrow E \rightarrow G \rightarrow 0$ \ is \textbf{pure}
if it remains exact when tensoring it with any $R$-module. In this case
we say that \ $F$ \ is a \textbf{pure} submodule of $E$.

We say that an $R$-module
$E$ is  \textbf{FP-injective} if $\mathrm{Ext}_R^1(F,E)=0,$ for every finitely
presented $R$-module $F.$ A ring $R$ is called \textbf{self
FP-injective} if it is FP-injective as $R$-module.

\section{Global case}
\label{S:glo}
\begin{proposition}\label{P:redu}
Let $R$ be a ring, $E$ a left $R$-module and $U$ a submodule of $E$. Then the following conditions are equivalent:
\begin{enumerate}[(1)]
\item $E/U$ is FP-injective if $E$ is FP-injective;
\item $E/U$ is FP-injective if $E$ is an injective hull of $U$.
\end{enumerate}
\end{proposition}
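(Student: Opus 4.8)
The plan is to prove the two implications separately, the first being immediate and the second resting on a pushout construction.

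For $(1)\Rightarrow(2)$: every injective module is FP-injective, so if $E$ is an injective hull of $U$ then in particular $E$ is FP-injective, and $(1)$ gives that $E/U$ is FP-injective. Nothing more is needed here.

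For $(2)\Rightarrow(1)$: fix an FP-injective module $E$ with $U\subseteq E$, and let $E(U)$ denote an injective hull of $U$. The idea is to replace the possibly badly behaved pair $(E,U)$ by the pushout $P$ of the two inclusions $U\hookrightarrow E$ and $U\hookrightarrow E(U)$, that is, $P=(E\oplus E(U))/D$ with $D=\{(u,-u)\mid u\in U\}$. Since a pushout of a monomorphism is a monomorphism, both structural maps $E\to P$ and $E(U)\to P$ are injective, and computing the two cokernels yields short exact sequences
\[
0\to E\to P\to E(U)/U\to 0,\qquad 0\to E(U)\to P\to E/U\to 0 .
\]
Now use two closure properties of the class of FP-injective modules, each an instant consequence of the long exact sequence for $\mathrm{Ext}^1_R(F,-)$ with $F$ finitely presented: the class is closed under extensions and under direct summands. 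By hypothesis $(2)$, $E(U)/U$ is FP-injective; together with $E$ FP-injective, the first sequence forces $P$ to be FP-injective. In the second sequence $E(U)$ is injective, hence that sequence splits and $E/U$ is a direct summand of $P$, so $E/U$ is FP-injective. This establishes $(1)$.

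The only step requiring an idea is the choice of $P$: the pushout simultaneously realizes $P$ as an extension of the FP-injective module $E(U)/U$ by $E$ (so that $P$ inherits FP-injectivity) and realizes $E/U$ as the quotient of $P$ by the \emph{injective} submodule $E(U)$ (so that this quotient splits off). Once this is seen, the verification that $P/\mathrm{im}(E)\cong E(U)/U$ and $P/\mathrm{im}(E(U))\cong E/U$ is a routine diagram chase and the remaining arguments are purely formal, so I do not anticipate any genuine obstacle.
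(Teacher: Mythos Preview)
Your proof is correct, and the pushout idea is a genuinely different route from the paper's argument.

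For $(2)\Rightarrow(1)$ the paper proceeds in two stages. First it treats the case where $E$ is injective: it chooses an injective hull $E'$ of $U$ inside $E$ and uses the extension
\[
0\longrightarrow E'/U\longrightarrow E/U\longrightarrow E/E'\longrightarrow 0,
\]
with $E'/U$ FP-injective by $(2)$ and $E/E'$ injective, to conclude that $E/U$ is FP-injective. Second, for a general FP-injective $E$, it embeds $E$ in its injective hull $H$, applies the first step to $H$, and then uses that $E/U$ is a \emph{pure} submodule of $H/U$ (since $E$ is pure in $H$) to pull FP-injectivity back down.

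Your single pushout $P$ collapses both stages into one: the sequence $0\to E\to P\to E(U)/U\to 0$ plays the role of the extension argument, and the splitting of $0\to E(U)\to P\to E/U\to 0$ replaces the purity step by the simpler observation that direct summands of FP-injective modules are FP-injective. Both proofs use closure of FP-injectives under extensions; the advantage of your version is that it avoids checking that purity of $E$ in $H$ descends to purity of $E/U$ in $H/U$, while the paper's version has the virtue of staying entirely inside modules already present in the problem.
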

\begin{proof}
It is obvious that $(1)\Rightarrow (2)$.

$(2)\Rightarrow (1)$. First we assume that $E$ is injective. Then $E$ contains a submodule $E'$ which is an injective hull of $U$. Since $E/E'$ is injective and $E'/U$ FP-injective, then $E/U$ is FP-injective too. Now we assume that $E$ is FP-injective. Let $H$ be the injective hull of $E$. Then $E/U$ is a pure submodule of $H/U$. We conclude that $E/U$ is FP-injective.
\end{proof}
The following theorem contains a generalization of \cite[Corollary 4]{Lar05}.
\begin{theorem}
\label{T:semiher} Let $R$ be a ring and $I$ its injective hull as left $R$-module. Then the following conditions are equivalent:
\begin{enumerate}
\item $R$ is left semihereditary;
\item each homomorphic image of any FP-injective left module is FP-injective;
\item each homomorphic image of $I$ is FP-injective.
\end{enumerate}
\end{theorem}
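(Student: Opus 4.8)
The plan is to prove the cycle $(1)\Rightarrow(2)\Rightarrow(3)\Rightarrow(1)$, so that the genuinely new content is the implication $(3)\Rightarrow(1)$; the equivalence $(1)\Leftrightarrow(2)$ is exactly Megibben's theorem \cite[Theorem 2]{Meg70}, which I would simply cite, and $(2)\Rightarrow(3)$ is trivial since $I$ is injective, hence FP-injective. So the real work is to deduce left semihereditarity from the hypothesis that every homomorphic image of the injective hull $I$ of $R$ is FP-injective. Here I would invoke Proposition \ref{P:redu} in a crucial way: condition $(3)$ says precisely that $E/U$ is FP-injective whenever $E$ is an injective hull of $U$ \emph{and} $U$ embeds in $I$ in a way making $E$ a direct summand of (a copy of the injective hull generated by) $R$; more to the point, once we know $I$ is injective, the hypothesis ``every quotient of $I$ is FP-injective'' propagates to every FP-injective module by the argument in Proposition \ref{P:redu}, since any FP-injective $E$ sits purely inside its injective hull $H$, and $H$ is a direct summand of a product (or suitable direct sum) of copies of $I$ only if $R$ is such that... — this is the subtle point, and I would instead argue directly.

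Concretely, to prove $(3)\Rightarrow(1)$ I would take a finitely generated left ideal $J$ of $R$ and show $J$ is projective, equivalently (since $R$ is assumed... not assumed Noetherian) that $R/J$ has a presentation making $J$ finitely presented and flat, or — following Laradji's approach for Problem 33 in \cite{Lar05} — show directly that the inclusion $J\hookrightarrow R$ splits after tensoring appropriately, i.e. that $J$ is a finitely generated flat ideal and finitely presented. The mechanism: form the quotient $I/R$; by hypothesis it is FP-injective, so $\mathrm{Ext}^1_R(R/J, I/R)=0$ for the finitely presented module $R/J$. Chasing the long exact sequence of $\mathrm{Ext}^\bullet_R(R/J,-)$ applied to $0\to R\to I\to I/R\to 0$, and using that $I$ is injective (so $\mathrm{Ext}^1_R(R/J,I)=0$), one gets $\mathrm{Ext}^2_R(R/J,R)=0$. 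Since this holds for every finitely presented cyclic $R/J$, and more generally — by the same computation applied to $0\to N\to I^{(\kappa)}\to C\to 0$ with $N$ a finitely generated submodule of a free module and $C$ its FP-injective-by-hypothesis cokernel direction — one bootstraps to: $\mathrm{Ext}^2_R(F,R)=0$ for every finitely presented $F$, hence (shifting) $\mathrm{Ext}^1_R(F',R)=0$ for every finitely presented $F'$ arising as a first syzygy, which forces every finitely generated submodule of a free module to be a direct summand locally, i.e. projective; this is the standard homological characterization of semihereditarity (weak global dimension $\le 1$ plus coherence, or: the class of submodules of projectives is closed under the relevant Ext-vanishing).

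The main obstacle I anticipate is the passage from ``quotients of the single module $I$ are FP-injective'' to ``quotients of \emph{arbitrary} FP-injective modules, in particular quotients of large injective modules $I^{(\kappa)}$ or injective hulls of arbitrary finitely generated ideals, are FP-injective.'' This is where Proposition \ref{P:redu} must be leveraged carefully: its statement is for a \emph{fixed} pair $(E,U)$, so I would apply it with $U$ ranging over submodules whose injective hull is a summand of a power of $I$, and then handle general $U$ by writing its injective hull as a direct summand of an injective module built from copies of $I$ — using that over any ring the injective hull of $R$ is a ``big enough'' injective cogenerator only up to taking products, which need not stay injective. The clean fix, and the one I would pursue, is: do not try to globalize the hypothesis; instead observe that to test projectivity of a finitely generated left ideal $J$ it suffices to control $\mathrm{Ext}^1_R(R/J, M)$ and $\mathrm{Ext}^2_R(R/J, M)$ for $M=R$ and $M$ FP-injective, and the exact sequence $0\to R\to I\to I/R\to 0$ already delivers $\mathrm{Ext}^2_R(R/J,R)=0$ from $(3)$ alone; then a separate, purely homological lemma (``if $\mathrm{Ext}^2_R(F,R)=0$ for all finitely presented $F$ then $R$ is left semihereditary'') closes the argument. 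Verifying that this homological lemma holds in full generality — without Noetherian or coherence hypotheses baked in — is the part I would need to be most careful about, and it is presumably exactly the generalization of \cite[Corollary 4]{Lar05} the theorem advertises.
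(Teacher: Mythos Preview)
Your approach has a genuine gap: the ``purely homological lemma'' you propose at the end --- that $\mathrm{Ext}^2_R(F,R)=0$ for every finitely presented left module $F$ forces $R$ to be left semihereditary --- is false. Take $R=k[x]/(x^2)$ for a field $k$. This ring is self-injective, so $I=R$ and $\mathrm{Ext}^i_R(M,R)=0$ for every $i\geq 1$ and every module $M$; in particular your Ext-vanishing hypothesis holds. Yet the ideal $(x)\cong k$ is finitely generated and not projective (the only projectives over a local ring are free), so $R$ is not semihereditary. The information contained in the single short exact sequence $0\to R\to I\to I/R\to 0$ is therefore not enough; you really do need to exploit the hypothesis on \emph{all} quotients of $I$, not just $I/R$.

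The paper's route is to prove $(3)\Rightarrow(2)$ directly, and the mechanism is exactly the globalization you considered and then set aside. Given any FP-injective $M$ with submodule $K$, one reduces via Proposition~\ref{P:redu} to $M$ injective, then covers $M$ by an epimorphism from $I^{(\Lambda)}$ (extend an epimorphism $R^{(\Lambda)}\to M$ by injectivity of $M$). So it suffices to show every quotient of $I^{(\Lambda)}$ is FP-injective. For $\Lambda$ finite of size $n$ this goes by induction on $n$ using the obvious filtration $0\to I^{n-1}/(K\cap I^{n-1})\to I^n/K\to I/K'\to 0$. For $\Lambda$ infinite one writes $I^{(\Lambda)}/K$ as the directed union, over finite subsets $\Lambda_\gamma\subseteq\Lambda$, of the submodules $I_\gamma/(K\cap I_\gamma)$, each FP-injective by the finite case, and invokes Stenstr\"om's result \cite[Corollary~2.3]{Ste70} that FP-injectivity is preserved under directed unions. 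Your worry that ``products of copies of $I$ need not stay injective'' is thus bypassed: one uses direct sums, not products, and closes under directed union rather than trying to stay inside the class of injectives.
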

\begin{proof}
By \cite[Theorem 2]{Meg70} $(1)\Leftrightarrow (2)$, and it is obvious that $(2)\Rightarrow (3)$.

$(3)\Rightarrow (2)$. Let $M$ be a FP-injective left $R$-module and $K$ a submodule of $M$. To show that $M/K$ is FP-injective we may assume that $M$ is injective by Proposition \ref{P:redu}. There exist a set $\Lambda$ and an epimorphism $g:R^{(\Lambda)}\rightarrow M$. Since $M$ is injective, we can extend $g$ to an epimorphism from $I^{(\Lambda)}$ into $M$. Hence, it is enough to show that each homomorphic image of $I^{(\Lambda)}$ is FP-injective for any set $\Lambda$. First we assume that $\Lambda$ is a finite set of cardinal $n$. Let $K$ be a submodule of $I^n$ and $p:I^n=I^{n-1}\oplus I\rightarrow I$ the canonical projection. We note $K'$ the image of $K$ by $p$. We get the following exact sequence:
\[0\rightarrow I^{n-1}/K\cap I^{n-1}\rightarrow I^n/K\rightarrow I/K'\rightarrow 0.\]
So, by induction on $n$ we get that $I^n/K$ is FP-injective.
 Now, let $(\Lambda_{\gamma})_{\gamma\in\Gamma}$  be the family of finite subsets of $\Lambda$ where $\Gamma$ is an index set. For each $\gamma\in\Gamma$ we put 
\[I_{\gamma}=\{x=(x_{\lambda})_{\lambda\in\Lambda}\in I^{(\Lambda)}\mid x_{\lambda}=0,\ \forall \lambda\notin\Lambda_{\gamma}\}.\] If $K$ is submodule of $I^{(\Lambda)}$ then $I^{(\Lambda)}/K$ is the union of the family of submodules $(I_{\gamma}/K\cap I_{\gamma})_{\gamma\in\Gamma}$. We use \cite[Corollary 2.3]{Ste70} to conclude.
\end{proof}

Given a ring $R$ and a left $R$-module $M$, we say that $M$ is \textbf{P-injective} if $\mathrm{Ext}_R^1(R/Rr,M)=0$ for any $r\in R$. When $R$ is a domain, $M$ is P-injective if and only if it is divisible. We say that $R$ is a left \textbf{PP-ring} if  any principal left ideal is projective.
 
The following theorem can be proven in a similar way as the previous.

\begin{theorem}
\label{T:PP} Let $R$ be a ring and $I$ its injective hull as left $R$-module. Then the following conditions are equivalent:
\begin{enumerate}
\item[{\rm (1)}] $R$ is a left PP-ring;
\item[{\rm (2)}] each homomorphic image of any P-injective left module is P-injective;
\item[{\rm (3)}] each homomorphic image of $I$ is P-injective.
\end{enumerate}
\end{theorem}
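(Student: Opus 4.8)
The plan is to imitate the proof of Theorem~\ref{T:semiher} step by step, replacing ``FP-injective'' by ``P-injective'' and, crucially, ``pure'' by ``RD-pure'' (relatively divisible: $U$ is RD-pure in $E$ if $rE\cap U=rU$ for every $r\in R$). The equivalence $(1)\Leftrightarrow(2)$ is the P-injective counterpart of \cite[Theorem 2]{Meg70}; since it is not quoted, I would supply the short dimension shift. As $Rr\cong R/\mathrm{ann}_\ell(r)$ for each $r\in R$, the exact sequence $0\to Rr\to R\to R/Rr\to 0$ shows that $R$ is a left PP-ring if and only if $\mathrm{pd}_R(R/Rr)\le 1$ for every $r\in R$. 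If this holds and $M$ is P-injective with submodule $K$, the relevant portion of the long exact sequence obtained from $0\to K\to M\to M/K\to 0$ is $\mathrm{Ext}_R^1(R/Rr,M)\to\mathrm{Ext}_R^1(R/Rr,M/K)\to\mathrm{Ext}_R^2(R/Rr,K)$, whose outer terms both vanish; hence $M/K$ is P-injective. Conversely, if every homomorphic image of a P-injective module is P-injective, then embedding an arbitrary module $N$ in an injective (hence P-injective) module $E$ and using the isomorphism $\mathrm{Ext}_R^1(R/Rr,E/N)\cong\mathrm{Ext}_R^2(R/Rr,N)$ forces $\mathrm{pd}_R(R/Rr)\le 1$, i.e.\ $Rr$ is projective. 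The implication $(2)\Rightarrow(3)$ is trivial.

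For $(3)\Rightarrow(2)$ I would first establish the P-injective analogue of Proposition~\ref{P:redu}, namely that it is enough to treat the case where $E$ is injective. This rests on four elementary facts, each a direct translation of what is used in the FP-injective setting: (a) a P-injective module is RD-pure in every module containing it (if $rh\in E$ then $\mathrm{ann}_\ell(r)$ annihilates $rh$, so P-injectivity of $E$ yields $e\in E$ with $re=rh$); (b) if $E$ is RD-pure in $H$ and $U\subseteq E$, then $E/U$ is RD-pure in $H/U$; (c) an RD-pure submodule of a P-injective module is P-injective; (d) an extension of P-injective modules is P-injective. With these in hand, the argument of Proposition~\ref{P:redu} carries over without change.

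Now suppose $E=M$ is injective. Choosing an epimorphism $R^{(\Lambda)}\to M$ and extending it, by injectivity of $M$, to a (necessarily surjective) homomorphism $I^{(\Lambda)}\to M$, one sees that every homomorphic image of $M$ --- hence of any P-injective module --- is a homomorphic image of some $I^{(\Lambda)}$. For finite $\Lambda$ of cardinality $n$ one proceeds by induction on $n$ exactly as in Theorem~\ref{T:semiher}, using $0\to I^{n-1}/(K\cap I^{n-1})\to I^n/K\to I/K'\to 0$ (with $K'$ the image of $K$ under the projection $I^n\to I$) together with (d) and hypothesis $(3)$. For arbitrary $\Lambda$, $I^{(\Lambda)}/K$ is the directed union of the submodules $I_\gamma/(K\cap I_\gamma)$ indexed by the finite subsets $\Lambda_\gamma$ of $\Lambda$, each P-injective by the finite case; here, instead of invoking \cite[Corollary 2.3]{Ste70}, one may simply note that a directed union of P-injective submodules is P-injective, since any element killed by $\mathrm{ann}_\ell(r)$ already lies in one member of the union, which then supplies a preimage under multiplication by $r$.

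I do not expect a genuine obstacle. The only point needing attention is the observation that the word ``pure'' in the FP-injective argument must here be read as ``RD-pure'', together with the verifications (a)--(c); after that every step is routine, and in fact the passage to the directed union is easier than for FP-injectivity because no purity hypothesis on the union is required.
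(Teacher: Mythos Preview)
Your proposal is correct and follows exactly the approach the paper indicates: the paper does not give a separate proof of Theorem~\ref{T:PP} but simply states that it ``can be proven in a similar way as the previous'', and your systematic replacement of FP-injective by P-injective and of purity by RD-purity carries out precisely this translation. The additional details you supply (the dimension-shift argument for $(1)\Leftrightarrow(2)$, the elementary verifications (a)--(d), and the direct handling of the directed union) are all correct and fill in what the paper leaves implicit.
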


The following is a slight improvement of \cite[Th\'eor\`eme 4]{Cou75}. 

\begin{theorem}\label{T:gw1}
Let $R$ be a commutative ring. The following conditions are equi\-valent:
\begin{enumerate}
\item $R$ is of global weak dimension $\leq 1$;
\item each finitely cogenerated factor of any finitely cogenerated FP-injective $R$-module is FP-injective;
\item each finitely cogenerated $R$-module is of FP-injective dimension $\leq 1$;
\item each finitely cogenerated factor of any FP-injective $R$-module of finite Goldie dimension is FP-injective;
\item  each $R$-module of finite Goldie dimension is of FP-injective dimension $\leq 1$.
\end{enumerate}
\end{theorem}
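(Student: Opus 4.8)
The plan is to prove the cycle of implications $(1)\Rightarrow(5)\Rightarrow(4)\Rightarrow(2)\Rightarrow(1)$ together with $(1)\Leftrightarrow(3)$, leaning heavily on the three already-quoted results of \cite{Cou75} (Th\'eor\`emes 3 et 4) and on Proposition~\ref{P:redu}.

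\medskip

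\textbf{Outline of the argument.}

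For $(1)\Rightarrow(5)$: assume $\mathrm{w.gl.dim}\, R\le 1$ and let $N$ be any $R$-module of finite Goldie dimension. Its injective hull $E$ is then a finite direct sum $E_1\oplus\cdots\oplus E_n$ of indecomposable injectives. I would first treat the case $n=1$: if $E$ is an indecomposable injective, then by \cite[Th\'eor\`eme 4]{Cou75} (or rather its finitely cogenerated version, after passing to the localization at the associated prime and using that an indecomposable injective is finitely cogenerated as a module over that local ring) every factor of $E$ has FP-injective dimension $\le 1$; patching these together via the exact sequences $0\to E_{n-1}/(K\cap E_{n-1})\to E/K\to E_n/K'\to 0$ exactly as in the proof of Theorem~\ref{T:semiher} (induction on $n$, using that the class of modules of FP-injective dimension $\le1$ is closed under extensions), one gets that $E/N$, hence $N$ itself (via $0\to N\to E\to E/N\to 0$ and $E$ FP-injective of FP-injective dimension $0$), has FP-injective dimension $\le1$. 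The step $(5)\Rightarrow(4)$ is immediate: a finitely cogenerated factor $F$ of an FP-injective module $E$ of finite Goldie dimension is itself of finite Goldie dimension, hence of FP-injective dimension $\le1$; but one more step is needed, namely that the kernel of $E\to F$ being FP-injective forces $F$ to be FP-injective — this follows because $E$ is FP-injective (FP-injective dimension $0$), so in $0\to K\to E\to F\to 0$ one reads off $\mathrm{FP\text{-}inj.dim}\,F\le 1$ and combines with FP-injectivity of $E$\dots actually the cleanest route is to note $(5)$ directly gives more than $(4)$ once one also invokes that $K$ has finite Goldie dimension, so I would instead argue $(4)$ is the special finitely cogenerated instance. $(4)\Rightarrow(2)$ is trivial since a finitely cogenerated module has finite Goldie dimension.

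\medskip

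The implication $(2)\Rightarrow(1)$ is where I would quote \cite[Th\'eor\`eme 4]{Cou75} directly: condition $(2)$ here is verbatim the characterization of global weak dimension $\le 1$ given there (each finitely cogenerated factor of a finitely cogenerated injective — equivalently FP-injective, since a finitely cogenerated FP-injective module over a ring of the relevant type is injective, or one simply notes finitely cogenerated FP-injective $\Rightarrow$ injective is not needed because the hypothesis in $(2)$ is weaker, so it still implies $(1)$ a fortiori). Finally $(1)\Leftrightarrow(3)$: $(5)\Rightarrow(3)$ is trivial, and $(3)\Rightarrow(1)$ follows by the same citation since finitely cogenerated of FP-injective dimension $\le1$ specializes to the finitely cogenerated-factor statement by taking an injective hull and quotienting.

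\medskip

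\textbf{Main obstacle.} The delicate point is the reduction in $(1)\Rightarrow(5)$ from the \emph{finitely cogenerated} statement of \cite{Cou75} to one about arbitrary indecomposable injectives and then to finite Goldie dimension. An indecomposable injective $E_\lambda = E_R(R/\mathfrak p)$ need not be finitely cogenerated over $R$, so \cite[Th\'eor\`eme 4]{Cou75} does not apply verbatim; one must localize at $\mathfrak p$ (using that global weak dimension $\le1$ localizes, and that $E_R(R/\mathfrak p)$ is an $R_{\mathfrak p}$-module which \emph{is} finitely cogenerated there, being the injective hull of the residue field), run the argument over $R_{\mathfrak p}$, and then transfer FP-injective dimension back to $R$ — this last transfer requires care since FP-injectivity is not a local property in general, and one likely needs Th\'eor\`eme~3 of \cite{Cou75} to control FP-injective dimension under this change of rings, or a direct $\mathrm{Ext}$ computation using that finitely presented $R$-modules with support in $\{\mathfrak p\}$ suffice to test FP-injective dimension of a $\mathfrak p$-local module. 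Assembling this transfer cleanly, and the closure-under-extensions bookkeeping in the induction on Goldie dimension, is the bulk of the real work.
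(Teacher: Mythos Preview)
Your overall cycle is reasonable and you have correctly located the real difficulty in the passage from $(1)$ to $(5)$. But the plan for that step has a genuine gap, and it is exactly the one you flag as the ``main obstacle'' without resolving.

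\medskip

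\textbf{The gap.} In your base case you take an indecomposable injective $E$, find a prime $\mathfrak p$ with $E$ an $R_{\mathfrak p}$-module, and hope to work over $R_{\mathfrak p}$. Two things fail. First, over a non-Noetherian ring an indecomposable injective need not be of the form $E_R(R/\mathfrak p)$, and even over the local ring $R_{\mathfrak p}$ it need not be the injective hull of the residue field; so your claim that it becomes finitely cogenerated after localization is false in general, and \cite[Th\'eor\`eme 4]{Cou75} does not apply to it. Second, and more seriously, for an arbitrary $R$-submodule $K\subseteq E$ the quotient $E/K$ is \emph{not} an $R_{\mathfrak p}$-module, because $K$ need not be closed under the action of $(R\setminus\mathfrak p)^{-1}$. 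So even if you established FP-injectivity over $R_{\mathfrak p}$, there is nothing to transfer: the object you care about does not live over $R_{\mathfrak p}$. Your proposed $\mathrm{Ext}$ argument (``finitely presented modules with support in $\{\mathfrak p\}$ suffice'') presupposes that $E/K$ is $\mathfrak p$-local, which is precisely what fails.

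\medskip

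\textbf{How the paper gets around this.} The paper does not attempt $(1)\Rightarrow(5)$ directly. It proves $(1)\Rightarrow(4)$ instead, and the restriction in $(4)$ that the factor $M$ be \emph{finitely cogenerated} is what makes localization work: one writes $E=E_1\oplus\cdots\oplus E_n$ and the injective hull of $M$ as $F_1\oplus\cdots\oplus F_q$, collects the primes $P_1,\dots,P_n,L_1,\dots,L_q$ attached to all these indecomposables, and localizes at $S=R\setminus(P_1\cup\cdots\cup L_q)$. Now $E$, $M$, and the map between them are all $S^{-1}R$-modules, $S^{-1}R$ is semilocal, and a semilocal ring of weak global dimension $\le 1$ is semihereditary, so Megibben's theorem applies. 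The passage $(4)\Rightarrow(5)$ (and likewise $(2)\Rightarrow(3)$) is then handled by \cite[Th\'eor\`eme 3]{Cou75} in the following way: any factor $M$ of an injective $E$ of finite Goldie dimension embeds purely into a product $\prod_\lambda M_\lambda$ of finitely cogenerated factors of $E$; each $M_\lambda$ is FP-injective by $(4)$, hence so is the product, hence so is the pure submodule $M$. This is a different use of Th\'eor\`eme~3 from the one you gesture at, and it sidesteps the transfer problem entirely.
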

\begin{proof}
By \cite[Th\'eor\`eme 4]{Cou75} $(1)\Leftrightarrow (2)$. It is obvious that $(3)\Rightarrow (2)$, $(5)\Rightarrow (4)$ and $(4)\Rightarrow (2)$.

$(2)\Rightarrow (3)$. Let $E$ be a injective $R$-module of finite Goldie dimension and $M$ be a factor of $E$. By using \cite[Th\'eor\`eme 3]{Cou75}, it is easy to prove that $M$ is a pure submodule of an module $M'$ with $M'=\prod_{\lambda\in\Lambda}M_{\lambda}$, where $\Lambda$ is an index set and $M_{\lambda}$ is a finitely cogenerated factor of $M$ for each $\lambda\in\Lambda$. Then $M_{\lambda}$ is a factor of $E$, whence it is FP-injective by $(2)$, for each $\lambda\in\Lambda$. We successively deduce that $M'$ and $M$ are FP-injective.

$(4)\Rightarrow (5)$. By Proposition \ref{P:redu} we may assume that $E$ is injective of finite Goldie dimension. To conclude we do as in the proof of $(2)\Rightarrow (3)$.

$(1)\Rightarrow (4)$. Let $p:E\rightarrow M$ be an epimorphism where $E$ is an injective $R$-module of finie Goldie dimension and $M$ a finitely cogenerated $R$-module. Let $u$ be the inclusion map from $M$ into its injective hull $F$ and $f=u\circ p$. Then $E=E_1\oplus\dots\oplus E_n$ and $F=F_1\oplus\dots\oplus F_q$ where $E_i$ and $F_j$ are indecomposable for $i=1,\dots,n$ and $j=1,\dots,q$. Since the endomorphism ring of any indecomposable injective module is local, there exist maximal ideals $P_1,\dots,P_n$ and $L_1,\dots,L_p$ of $R$ such that $E_i$ is a module over $R_{P_i}$ for $i=1,\dots,n$ and $F_j$ is a module over $R_{L_j}$ for $j=1,\dots,q$. Let $S=R\setminus(P_1\cup\dots\cup P_n\cup L_1\cup\dots\cup L_q)$. Then $E$ and $F$ are modules over $S^{-1}R$, $f$ is a $S^{-1}R$-homomorphism. It follows that $M$ is also a module over $S^{-1}R$. Since $S^{-1}R$ is semilocal, $(1)$ implies that it is semihereditary. We conclude that $M$ is FP-injective.
\end{proof}

Recall that a commutative ring $R$ is said to be {\bf arithmetical} if $R_P$ is a chain ring for each maximal ideal $P$ of $R$. It is well known that a reduced ring is arithmetical if and only if it is of global weak dimension $\leq 1$.

\begin{theorem}\label{T:gw2}
Let $R$ be a commutative ring. The following conditions are equi\-valent:
\begin{enumerate}
\item $R$ is of global weak dimension $\leq 1$ and $R/L$ is  an almost maximal Pr\"ufer domain for every minimal prime ideal $L$ of $R$;
\item $R$ is of global weak dimension $\leq 1$ and each factor of $R_L$ is injective for each minimal prime ideal of $R$;
\item each $R$-module of finite Goldie dimension is of injective dimension $\leq 1$.
\end{enumerate} 
\end{theorem}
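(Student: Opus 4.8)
The plan is to establish the cycle $(3)\Rightarrow(1)\Rightarrow(2)\Rightarrow(3)$, treating Theorem~\ref{T:gw1} as the natural bridge between "injective dimension $\le 1$" and "FP-injective dimension $\le 1$", and a gluing/localization argument to reduce to the domain case $R/L$.

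First I would prove $(3)\Rightarrow(1)$. Assuming every $R$-module of finite Goldie dimension has injective dimension $\le 1$, in particular every such module is of FP-injective dimension $\le 1$, so Theorem~\ref{T:gw1}$(5)\Rightarrow(1)$ gives $\mathrm{w.\,gl.\,dim}\,R\le 1$. For the second half, fix a minimal prime $L$; then $R/L$ is a domain which is arithmetical (since $\mathrm{w.\,gl.\,dim}\le 1$ localizes and descends to quotients by primes), hence Pr\"ufer. I would show $R_L$ is injective as an $R$-module of finite (indeed $1$) Goldie dimension — it is uniform since $R/L$ embeds in it and $R_L$ is the quotient field of the domain $R/L$ — wait, $R_L$ need not contain $R/L$ unless $L$ is the kernel; more carefully, $R_L\cong (R/L)_{(0)}$ is the field of fractions $K(R/L)$, which is certainly of finite Goldie dimension, so by $(3)$ it has injective dimension $\le 1$; but a field is injective over itself, and one shows it is already injective over $R$. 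Then every factor of $R_L$ has a finite Goldie dimension as well (being cyclically generated over a field-image, it is a vector space, hence a sum of copies of $K(R/L)$ — here one must argue any nonzero factor of $K(R/L)$ is again $K(R/L)$, up to direct sums controlled by Goldie dimension), so by $(3)$ each factor of $R_L$ is of injective dimension $\le 1$; combined with injectivity of $R_L$ this forces each factor of $R_L$ to be injective. By \cite[Theorem~2.3]{Fac82} (or the classical characterization), a Pr\"ufer domain $R/L$ all of whose factor modules of its quotient field are injective is almost maximal; this yields $(1)$.

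Next, $(1)\Rightarrow(2)$ is the easy implication: almost maximal Pr\"ufer domains have the property that every factor of the quotient field is injective — this is a standard fact about almost maximal valuation/Pr\"ufer domains (every proper factor of $Q$ is a linearly compact divisible module, hence injective), so $(2)$ follows after noting $R_L=K(R/L)$ for minimal primes $L$.

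Finally, the crux is $(2)\Rightarrow(3)$. Given an $R$-module $M$ of finite Goldie dimension, by Proposition~\ref{P:redu}-style reductions I may assume $M$ is a factor $E/K$ with $E$ injective of finite Goldie dimension; then $E$ decomposes as a finite direct sum of indecomposable injectives, each a module over some $R_{P_i}$ with $P_i$ maximal, and (as in the proof of $(1)\Rightarrow(4)$ of Theorem~\ref{T:gw1}) $M$ becomes a module over a semilocal localization $S^{-1}R$ of global weak dimension $\le 1$, hence semihereditary; so $M$ is at least FP-injective, i.e. of FP-injective dimension $0$. To upgrade FP-injectivity to injectivity I would use that over $S^{-1}R$ (semilocal of w.\,gl.\,dim $\le 1$, so arithmetical) the primes lying over minimal primes give local rings that are almost maximal Pr\"ufer domains by $(1)$; a module of finite Goldie dimension over such a ring that is FP-injective is in fact injective — this should follow by pasting the local pictures: $M$ embeds in a finite product of indecomposable injectives, each supported at one maximal ideal, and the almost-maximal hypothesis makes the relevant $\mathrm{Ext}^1$ against arbitrary (not merely finitely presented) cyclic modules vanish, via linear compactness of the factors of the $R_L$.

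The step I expect to be the main obstacle is this last passage from FP-injective to injective for modules of finite Goldie dimension: FP-injectivity only kills $\mathrm{Ext}^1$ against finitely presented modules, and bridging to all cyclic modules $R/A$ requires genuinely using the almost-maximal (linear compactness) hypothesis from $(1)$, presumably through a Baer-criterion argument that reduces injectivity to extending homomorphisms from ideals $A$, then completing such a homomorphism using linear compactness of $R/A$-related data — and one must handle the non-domain case by the minimal-prime decomposition, where the reduced hypothesis $\mathrm{w.\,gl.\,dim}\le 1$ guarantees $R$ embeds into $\prod_L R/L$ with good exactness properties. Keeping track of Goldie dimension through all these reductions (so that "finite Goldie dimension" is genuinely used and preserved) is the bookkeeping that needs care.
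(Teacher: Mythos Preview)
Your argument for $(3)\Rightarrow(1)$ contains a false step: factors of $R_L$ need \emph{not} have finite Goldie dimension (take $R=\mathbb{Z}$, $L=0$: then $R_L=\mathbb{Q}$ and $\mathbb{Q}/\mathbb{Z}$ has infinite Goldie dimension), and they are certainly not $R_L$-vector spaces. The fix is to apply $(3)$ to the \emph{submodule} rather than the factor: any nonzero $R$-submodule $V\subseteq R_L$ has Goldie dimension $1$ since $R_L$ is uniform, and $R_L$ is the injective hull of $V$ (for this one needs $R_L$ injective over $R$, which holds because $R/L$ is a flat $R$-module and $R_L$ is injective over the domain $R/L$). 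Then $(3)$ applied to $V$ gives $R_L/V$ injective directly. This is exactly how the paper proves $(3)\Rightarrow(2)$; the passage to $(1)$ is then \cite[Proposition~IX.4.5]{FuSa01}.

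The genuine gap is in $(2)\Rightarrow(3)$. Your plan---reduce to a semilocal localization, obtain FP-injectivity of $E/U$ via Theorem~\ref{T:gw1}, then ``upgrade'' to injectivity using linear compactness---never uses hypothesis $(2)$ in a structural way, and you correctly identify the upgrade step as the obstacle: there is no general device for promoting FP-injectivity of a single module to injectivity from linear compactness of the base. The paper bypasses FP-injectivity entirely. The missing observation is that over an almost maximal valuation domain $R_P$ (with residue field $R_L$, $L$ the minimal prime in $P$), every indecomposable injective module is a \emph{quotient of $R_L$}. Hence if $E=\mathrm{E}(U)=I_1\oplus\cdots\oplus I_n$ with each $I_i$ indecomposable, one collects the finitely many minimal primes $L_1,\dots,L_p$ annihilating the $I_i$; distinct minimal primes of a reduced arithmetical ring are comaximal, so $E$ and $U$ split as $\bigoplus_k E_k$ and $\bigoplus_k U_k$ along these primes, and each $E_k$ is a factor of some $R_{L_k}^{m_k}$. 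Now condition $(2)$ applies directly: an induction on $m_k$ (exactly as in the finite step of Theorem~\ref{T:semiher}) shows every factor of $R_{L_k}^{m_k}$ is injective, so $E_k/U_k$ is injective and hence so is $E/U$. Without the ``indecomposable injectives are quotients of $R_L$'' fact, you have no way to bring $(2)$ to bear.
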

\begin{proof}
Assume that $R$ is a reduced arithmetical ring. If $L$ is a minimal prime ideal of $R$, then $R/L$ is a submodule of $R_L$ and consequently it is a flat $R$-module. So, each injective $R/L$-module is injective over $R$ too. By \cite[Proposition IX.4.5]{FuSa01} we conclude that $(1)\Leftrightarrow (2)$.

$(3)\Rightarrow (2)$. By Theorem \ref{T:gw1} $R$ is a reduced arithmetical ring. Let $L$ be a minimal prime ideal. Then $R_L$ is a field and so it is an injective module of Goldie dimension one.

$(2)\Rightarrow (3)$. Let $I$ be an indecomposable injective module, $P$ the prime ideal of $R$ which is the inverse image of the maximal ideal of $\mathrm{End}_R(I)$ by the natural map $R\rightarrow\mathrm{End}_R(I)$ and $L$ the minimal prime ideal of $R$ contained in $P$. Since $I$ is a module over $R_P$ then it is annihilated by $L$, and since $R_P$ is almost maximal it is a factor of $R_L$. Now let $U$ be a module of finite Goldie dimension and $E$ its injective hull. Then $E=I_1\oplus\dots\oplus I_n$ where $I_i$ is indecomposable for $i=1,\dots,n$. Let $L_1,\dots,L_p$ be the minimal prime ideals of $R$ such that, for each $i=1,\dots,n$ there exists $j$, $1\leq j\leq p$ such that $I_i$ is annihilated by $L_k$. Then $E$ is annihilated by $L=L_1\cap\dots\cap L_p$. Since $R$ is arithmetical the minimal prime ideals $L_1,\dots, L_p$ are comaximal. Then $E=E_1\oplus \dots\oplus E_p$, $U=U_1\oplus\dots\oplus U_p$ where $E_k=E/L_kE$, $U_k=U/L_kU$ for $k=1,\dots,p$. So, $E/U\cong E_1/U_1\oplus\dots\oplus E_p/U_p$. From above, for each $k=1,\dots,p$, we deduce that $E_k/U_k$ is a factor of $R_{L_k}^{m_k}$ for some positive integer $m_k$. By induction on $m_k$ we show that $E_k/U_k$ is injective. Hence $E/U$ is injective.
\end{proof}

\begin{example} Let $R$ be the B\'ezout domain due to Heinzer and Ohm constructed in \cite[Example III.5.5]{FuSa01}. Then the injective dimension of any finitely cogenerated $R$-module is at most one, but $R$ does not verify the equivalent conditions of Theorem \ref{T:gw2}.
\end{example}
\begin{proof}
Since $R_P$ is a Noetherian valuation domain, it is almost maximal and each non-zero prime ideal is contained in a unique maximal ideal. So, by \cite[Theorem 2.3]{Fac82} each finitely cogenerated $R$-module is of injective dimension $\leq 1$. But some elements
of $R$ are contained in infinite many maximal ideals. So, by \cite[Theorem IV.3.9]{FuSa01} $R$ is not an almost maximal domain.
\end{proof}

\begin{proposition}\label{P:locoh}
Let $R$ be a locally coherent commutative ring. For any FP-injective $R$-module $E$ and any pure submodule $U$ of finite Goldie dimension, $E/U$ is FP-injective.
\end{proposition}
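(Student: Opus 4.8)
The plan is to reduce the problem to the coherent case by localizing and exploiting the finiteness of the Goldie dimension of $U$. First I would take an FP-injective $R$-module $E$ and a pure submodule $U$ of finite Goldie dimension. By Proposition~\ref{P:redu} applied to $U\subseteq E$ (and the fact that a pure submodule of $E$ embeds purely into the injective hull of $U$, with the quotient remaining a pure submodule of the corresponding factor of $E$'s injective hull), it suffices to treat the case where $E$ is the injective hull of $U$. Then $E$ is itself of finite Goldie dimension, so $E=I_1\oplus\cdots\oplus I_n$ with each $I_i$ an indecomposable injective module. Since $\operatorname{End}_R(I_i)$ is local, there is a prime ideal $P_i$ such that $I_i$ is a module over $R_{P_i}$; letting $S=R\setminus(P_1\cup\cdots\cup P_n)$, the module $E$—and hence $U$ and $E/U$—is a module over the semilocal ring $S^{-1}R$, and $U$ is still a pure submodule of $E$ over $S^{-1}R$ (purity is preserved under localization, and FP-injectivity of $E/U$ over $S^{-1}R$ implies it over $R$ because $S^{-1}R$ is a flat $R$-algebra and finitely presented $R$-modules push to finitely presented $S^{-1}R$-modules).

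Having replaced $R$ by $S^{-1}R$, I may now assume $R$ is semilocal. A locally coherent ring that is semilocal is coherent: a finitely generated ideal $J$ is finitely presented if and only if $J_P$ is finitely presented over $R_P$ for each of the finitely many maximal ideals $P$, and each $R_P$ is coherent by hypothesis, so the kernel of a surjection $R^k\to J$ localizes to a finitely generated module at each $P$, hence is finitely generated. With $R$ now coherent, I invoke the characterization quoted in the introduction: over a coherent ring, $E/U$ is FP-injective for any FP-injective $E$ and any \emph{pure} submodule $U$. This is exactly our situation—$U$ is pure in $E$ and $E$ is FP-injective—so $E/U$ is FP-injective over (the localized) $R$, and therefore over the original $R$.

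The main obstacle I expect is the descent step: verifying carefully that the reductions are legitimate, specifically that FP-injectivity of $E/U$ as an $S^{-1}R$-module really does entail FP-injectivity of $E/U$ as an $R$-module. The cleanest route is to note that for a finitely presented $R$-module $F$ one has $\operatorname{Ext}^1_R(F,M)\cong\operatorname{Ext}^1_{S^{-1}R}(S^{-1}F,M)$ whenever $M$ is already an $S^{-1}R$-module—because a projective resolution of $F$ by finitely generated free $R$-modules localizes to one of $S^{-1}F$, and $\operatorname{Hom}_R(R^m,M)=\operatorname{Hom}_{S^{-1}R}(S^{-1}R^m,M)$ for such $M$—and that $S^{-1}F$ is finitely presented over $S^{-1}R$. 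A secondary point needing care is the claim that a pure submodule of $E$ maps to a pure submodule of the appropriate factor of the injective hull of $U$; this follows from the fact that purity is preserved by pushouts along the inclusion into an injective hull, which is the mechanism already used in the proof of Proposition~\ref{P:redu}.
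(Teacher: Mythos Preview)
Your proposal is correct and follows essentially the same route as the paper's own proof: reduce via Proposition~\ref{P:redu} to $E$ the injective hull of $U$, decompose $E$ into indecomposable injectives, use the locality of their endomorphism rings to pass to a semilocal localization $S^{-1}R$, and then invoke coherence there. You supply more detail than the paper does---in particular the justification that a semilocal locally coherent ring is coherent and the descent of FP-injectivity from $S^{-1}R$ to $R$---but the architecture is identical.
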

\begin{proof}
By Proposition \ref{P:redu} we may assume that $E$ is injective of finite Goldie dimension. If $I$ is an indecomposable injective module then $\mathrm{End}_R(I)$ is a local ring. Let $P$ be the prime ideal which is the inverse image of the maximal ideal of $\mathrm{End}_R(I)$ by the canonical map $R\rightarrow\mathrm{End}_R(I)$. It follows that $I$ is a module over $R_P$. Now let $E=\oplus_{k=1}^nI_k$ be a $R$-module where $I_k$ is indecomposable and injective for $k=1,\dots,n$. Let $P_k$ be the prime ideal defined as above by $I_k$ for $k=1\dots,n$ and let $S=R\setminus (\cup_{1\leq k\leq n}P_k)$. Then $E$ and $U$ are module over the semilocal ring $S^{-1}R$. Since $R$ is locally coherent then $S^{-1}R$ is coherent. It follows that $E/U$ is FP-injective.
\end{proof}

\section{Chain ring case: preliminaries}
Some preliminaries are needed to prove our main results: Proposition \ref{P:main} and Theorems \ref{T:main} and \ref{T:mainCoh}.

\begin{lemma}
\label{L:crucial} Let $R$ be a chain ring, $E$ a FP-injective module, $U$ a pure essential submodule of $E$, $x\in E\setminus U$ and $a\in R$ such that $(0:a)\subseteq (U:x)$. Then:
\begin{enumerate}
\item if $(0:a)\subset (U:x)$ then $x\in U+aE$;
\item if $(0:a)=(U:x)$ then $x\notin U+aE$.
\end{enumerate}
\end{lemma}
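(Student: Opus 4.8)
The plan is to handle the two alternatives separately. For (2) I expect to use only that $U$ is essential in $E$; for (1) both hypotheses ($E$ FP-injective, $U$ pure) will be needed, together with the following observation, which I would isolate first: since $R/aR$ is finitely presented (it is the cokernel of $R\xrightarrow{\cdot a}R$, i.e. cyclic with the single relation $a$), FP-injectivity of $E$ gives $\mathrm{Ext}_R^1(R/aR,E)=0$; unwinding the sequence $0\to aR\to R\to R/aR\to 0$ this says precisely that $(0_E:(0:a))=aE$. This is a divisibility property of $E$ that holds \emph{even when $(0:a)$ is not finitely generated}, and it is the engine of part (1).

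For (2) I would argue by contradiction: suppose $x=u+ae$ with $u\in U$, $e\in E$. Working in $E/U$ we get $\bar x=a\bar e$, so $(U:x)$ (the annihilator of $\bar x$) equals the annihilator of $a\bar e$, which is $((U:e):a)$. The hypothesis $(0:a)=(U:x)$ then forces $((U:e):a)=(0:a)$, and this forces $aR\cap(U:e)=0$: if $0\neq ra\in(U:e)$ then $r\in((U:e):a)=(0:a)$, whence $ra=0$, absurd. Now $e\neq 0$ and $a\neq 0$ (otherwise $x=u\in U$), so $aR\neq 0$, while essentiality of $U$ in $E$ gives $Re\cap U\neq 0$, i.e. $(U:e)\neq 0$. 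But $aR$ and $(U:e)$ are nonzero ideals of the chain ring $R$, hence comparable, so $aR\cap(U:e)\neq 0$ — a contradiction. (Note this uses neither FP-injectivity nor purity.)

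For (1): from the strict inclusion I pick $c\in(U:x)\setminus(0:a)$, so $cx\in U$, $ca\neq 0$, and, $c$ not lying in $(0:a)$, comparability in the chain ring gives $(0:a)\subseteq cR$. Purity of $U$ yields $cE\cap U=cU$ (tensor $0\to U\to E\to E/U\to 0$ with $R/cR$), and since $cx\in cE\cap U$ there is $u'\in U$ with $cx=cu'$; put $w:=x-u'$, so $cw=0$. Then for every $b\in(0:a)$, writing $b=cr$ gives $bw=r(cw)=0$, hence $(0:a)w=0$, i.e. $w\in(0_E:(0:a))=aE$ by the divisibility identity above. Therefore $x=u'+w\in U+aE$, as desired.

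I expect the genuine obstacle to be part (1), and within it the case where $(0:a)$ is not finitely generated, since there one cannot directly apply the usual "division by $a$" property of FP-injective modules to the ideal $(0:a)$ itself. The way around it is the pair of ingredients highlighted: the identity $(0_E:(0:a))=aE$ (legitimate because $R/aR$ is always finitely presented), and the use of the witness $c$ of the strict inclusion, via purity, to correct $x$ by an element of $U$ into the submodule of $E$ annihilated by $(0:a)$.
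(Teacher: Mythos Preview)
Your proof is correct and, for part (1), essentially identical to the paper's: pick $c\in(U:x)\setminus(0:a)$, use purity to write $cx=cu'$, and then FP-injectivity of $E$ (applied to the finitely presented module $R/aR$) gives $x-u'\in aE$. The paper states this last step more tersely, but your identity $(0_E:(0:a))=aE$ is exactly what underlies it.

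For part (2) the paper takes a slightly shorter path: writing $x=u+ay$, one squeezes $(0:a)\subseteq(0:ay)=(0:x-u)\subseteq(U:x-u)=(U:x)=(0:a)$, so $(U:x-u)=(0:x-u)$, i.e.\ $R(x-u)\cap U=0$ with $x-u\neq 0$, contradicting essentiality directly. Your route via $aR\cap(U:e)=0$ and comparability of nonzero ideals is equally valid, just one step less direct; both arguments use only essentiality (not purity or FP-injectivity), as you correctly note.
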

\begin{proof}
$(1)$. Let $b\in (U:x)\setminus (0:a)$. Then $bx\in U$. Since $U$ is a pure submodule  there exists $u\in U$ such that $bx=bu$. We get that $(0:a)\subset Rb\subseteq (0:x-u)$. The FP-injectivity of $E$ implies that there exists $y\in E$ such that $x-u=ay$.

$(2)$. By way of contradiction suppose there exist $u\in U$ and $y\in E$ such that $x=u+ay$. Then we get that $(U:x)=(U:x-u)=(0:x-u)$. So, $U\cap R(x-u)=0$. This contradicts that $E$ is an essential extension of $U$.
\end{proof}

Let $M$ be a non-zero module over a  ring $R$. As in \cite[p.338]{FuSa01}
 we set:
\[M_{\sharp}=\{s\in R\mid \exists 0\ne x\in M\ \mathrm{such\ that}\
sx=0\}\quad\mathrm{and}\quad M^{\sharp}=\{s\in R\mid sM\subset M\}.\] 
Then $R\setminus M_{\sharp}$ and $R\setminus M^{\sharp}$ are multiplicative subsets of $R$. 

If $M$ is a module over a chain ring $R$ then $M_{\sharp}$ and $M^{\sharp}$ are prime ideals and they are called the {\bf bottom} and the {\bf top prime ideal}, respectively, associated with $M$.

When $I$ is a non-zero proper ideal, it is easy to check that \[I^{\sharp}=\{s\in R\mid I\subset (I:s)\}.\] So, $I^{\sharp}$ is the inverse image of the set of zero-divisors of $R/I$ by the canonical epimorphism $R\rightarrow R/I$. If we extend this definition to the ideal $0$ we have $0^{\sharp}=Z$. A proper ideal  $I$ of a chain ring $R$ is said to be \textbf{Archimedean} if $I^{\sharp}=P$. When $R$ is Archimedean each non-zero ideal of $R$ is Archimedean.

\begin{remark}\label{R:P=Z}
\textnormal{If $P=Z$ then by \cite[Lemma 3]{Gil71} and \cite[Proposition 1.3]{KlLe69} we have $(0:(0:I))=I$ for each ideal $I$ which is not of the form $Pt$ for some $t\in R$. In this case $R$ is self FP-injective and the converse holds. So, if $A$ is a proper Archimedean ideal then  $R/A$ is self FP-injective and it follows that $(A:(A:I))=I$ for each ideal $I\supseteq A$ which is not of the form $Pt$ for some $t\in R$.}
\end{remark}

\begin{lemma}
\label{L:topbot} Let $G$ be a FP-injective module over a chain ring $R$. Then $G^{\sharp}\subseteq Z\cap G_{\sharp}$ and $G$ is a module over $R_{G_{\sharp}}$.
\end{lemma}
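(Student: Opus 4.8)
The plan is to squeeze from the FP-injectivity of $G$ a single divisibility statement and then read off both assertions. We may assume $G\neq 0$. The key observation, call it $(\ast)$, is the following: if $s\in R$ and $x\in G$ satisfy $(0:s)\subseteq (0:x)$, then $x\in sG$. To see this, note that under that hypothesis the assignment $as\mapsto ax$ is a well-defined $R$-homomorphism $f\colon Rs\to G$ (if $as=a's$ then $a-a'\in (0:s)\subseteq (0:x)$, hence $ax=a'x$). Now $Rs$ is a finitely generated submodule of the free module $R$, and $R/Rs$ is finitely presented, being the cokernel of the endomorphism $r\mapsto rs$ of $R$; hence $\mathrm{Ext}^1_R(R/Rs,G)=0$, and applying $\mathrm{Hom}_R(-,G)$ to $0\to Rs\to R\to R/Rs\to 0$ shows that $f$ extends to some $\tilde f\colon R\to G$. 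Then $y:=\tilde f(1)$ satisfies $sy=\tilde f(s)=f(s)=x$, so $x\in sG$.

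Granting $(\ast)$, the conclusions follow quickly. First let $s\in R\setminus Z$. Then $(0:s)=0\subseteq (0:x)$ for every $x\in G$, so $(\ast)$ yields $sG=G$, i.e. $s\notin G^{\sharp}$; thus $G^{\sharp}\subseteq Z$. Next let $s\in R\setminus G_{\sharp}$, that is, let $s$ be a non-zerodivisor on $G$. For any $c\in (0:s)$ and any $x\in G$ we have $s(cx)=(sc)x=0$, whence $cx=0$; so $(0:s)$ annihilates $G$, hence $(0:s)\subseteq (0:x)$ for every $x\in G$, and $(\ast)$ again yields $sG=G$, i.e. $s\notin G^{\sharp}$. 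Therefore $G^{\sharp}\subseteq G_{\sharp}$, and combining the two inclusions gives $G^{\sharp}\subseteq Z\cap G_{\sharp}$.

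Finally set $S=R\setminus G_{\sharp}$, a multiplicative subset of $R$. By definition of $G_{\sharp}$ each $s\in S$ acts injectively on $G$, and by the inclusion $G^{\sharp}\subseteq G_{\sharp}$ just proved each $s\in S$ also satisfies $sG=G$, i.e. acts surjectively on $G$; so multiplication by $s$ is an $R$-automorphism of $G$, with $R$-linear inverse. Consequently the $R$-module structure on $G$ extends uniquely to an $R_{G_{\sharp}}=S^{-1}R$-module structure.

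The argument has no genuinely difficult step; the one point that needs a little care is the derivation of $(\ast)$ — identifying the relevant finitely presented module $R/Rs$ and checking that $f$ is well defined — together with the harmless conventions that $G\neq 0$ and that ``$s\in G^{\sharp}$'' means the inclusion $sG\subseteq G$ is strict. The chain ring hypothesis itself is used only to ensure that $G_{\sharp}$ is a prime ideal, so that the notation $R_{G_{\sharp}}$ makes sense; the displayed inclusions hold over an arbitrary ring.
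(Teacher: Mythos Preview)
Your proof is correct and follows essentially the same route as the paper's: both arguments show that $s\notin G_{\sharp}$ forces $(0:s)\subseteq(0:x)$ for every $x\in G$ (since $s$ acts injectively on $G$), and then invoke FP-injectivity to get $x\in sG$; the case $s\notin Z$ is handled identically via $(0:s)=0$. Your write-up is simply more explicit than the paper's, spelling out why $(\ast)$ follows from $\mathrm{Ext}^1_R(R/Rs,G)=0$ and why bijectivity of multiplication by each $s\in R\setminus G_{\sharp}$ yields the $R_{G_{\sharp}}$-module structure, points the paper leaves implicit.
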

\begin{proof}
Let $a\in R\setminus G_{\sharp}$ and $x\in G$. Let $b\in (0:a)$. Then $abx=0$, whence $bx=0$. So, $(0:a)\subseteq (0:x)$. It follows that $x=ay$ for some $y\in G$ since $G$ is FP-injective. Hence $a\notin G^{\sharp}$. If $a\notin Z$ then $0=(0:a)\subseteq (0:x)$ for each $x\in G$.
\end{proof}

\begin{proposition}
\label{P:main} Let $R$ be a chain ring, $E$ an FP-injective $R$-module and $U$ a pure submodule of $E$. Assume that $E_{\sharp}\subset Z$. Then $E/U$ is FP-injective.
\end{proposition}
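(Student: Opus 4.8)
The plan is to reduce, by standard steps, to a configuration covered by Lemma~\ref{L:crucial}, and then to use the hypothesis $E_\sharp\subset Z$ to exclude the one subcase of that lemma which would obstruct divisibility.

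\emph{Reductions.} Recall that a pure submodule of an FP-injective module is FP-injective and that an FP-injective module is pure in its injective hull. So $U$ is pure in the injective hull $H$ of $E$; since every nonzero element of $H$ has a nonzero multiple in $E$, we get $H_\sharp=E_\sharp\subset Z$; and since $E/U$ is pure in $H/U$, it suffices to treat the case where $E$ is injective. Now let $\widehat U\subseteq E$ be an injective hull of $U$ and write $E=\widehat U\oplus C$; then $E/U\cong\widehat U/U\oplus C$ with $C$ injective (hence FP-injective), $U$ is pure in $\widehat U$, and $\widehat U_\sharp\subseteq E_\sharp\subset Z$. Thus I may assume $U$ is a pure \emph{essential} submodule of the injective module $E$, with $E_\sharp\subset Z$. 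Finally, over a chain ring every finitely presented module is a direct sum of cyclically presented modules, so $E/U$ is FP-injective if and only if it is P-injective; that is, it remains to prove: whenever $a\in R$ and $x\in E$ satisfy $(0:a)\subseteq(U:x)$, then $x\in U+aE$.

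\emph{Case analysis.} Fix such $a$ and $x$; I may assume $x\notin U$. If $(0:a)\subsetneq(U:x)$, Lemma~\ref{L:crucial}(1) gives $x\in U+aE$ directly. So assume $(0:a)=(U:x)$; then $a\neq0$ (because $(U:x)\neq0$ by essentiality) and $(0:a)\neq R$ (because $x\notin U$). First I claim $a\in E_\sharp$. Otherwise multiplication by $a$ is injective on $E$; then for every $t\in(0:a)$ and $y\in E$ we have $a(ty)=(ta)y=0$, so $ty=0$, i.e. $y\in(0:_E(0:a))$; and $(0:_E(0:a))=aE$ since $E$ is FP-injective. Hence $aE=E$, so $x\in aE\subseteq U+aE$, contradicting Lemma~\ref{L:crucial}(2). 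Thus $a\in E_\sharp$, and in particular $Ra\subseteq E_\sharp$. Next, for every nonzero $b\in(0:a)=(U:x)$, purity yields $bx=bu$ for some $u\in U$, so $b(x-u)=0$ with $x-u\neq0$, whence $b\in E_\sharp$; therefore $(0:a)\subseteq E_\sharp$.

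\emph{The crux.} It remains to show that the case $(0:a)=(U:x)$ with $x\notin U$ cannot occur, and this is exactly where $E_\sharp\subset Z$ enters. Choose $c\in Z\setminus E_\sharp$. Multiplication by $c$, hence by each $c^n$, is injective on $E$, so $c^n\notin E_\sharp$ and therefore $E_\sharp\subseteq c^nR$ for all $n$; consequently $Ra+(0:a)\subseteq E_\sharp\subseteq\bigcap_n c^nR$. On the other hand essentiality forces $(0:x)\subsetneq(0:a)$ (otherwise $Rx\cap U=0$). Starting from a nonzero $b\in(0:a)\setminus(0:x)$ and exploiting the infinite $c$-divisibility of the elements of $(0:a)$ together with repeated applications of purity — each step replacing an element of $E\setminus U$ by another element of $E\setminus U$ having the same value of $(U:\,\cdot\,)$ but a larger annihilator — one should arrive at an element $x'\in E\setminus U$ with $(0:x')=(U:x')=(0:a)$, i.e. $Rx'\cap U=0$, contradicting the essentiality of $U$. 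Carrying out this limiting construction, and in particular verifying that it terminates — which is where the extra information in $E_\sharp\subset Z$ (for instance through the self‑FP‑injectivity of the relevant annihilator quotients, cf.\ Remark~\ref{R:P=Z}) is decisive — is the step I expect to be the main obstacle. Granting it, $E/U$ is P-injective, hence FP-injective.
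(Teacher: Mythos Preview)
Your approach diverges sharply from the paper's, and the gap you yourself flag in the ``crux'' paragraph is genuine and not easily repaired along the lines you sketch.

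The paper's proof is two lines and uses none of the machinery of Lemma~\ref{L:crucial}. By Lemma~\ref{L:topbot}, $E$ (and hence $U$) is a module over $R_L$ where $L=E_\sharp$. The hypothesis $L\subset Z$ then implies, by \cite[Theorem~11]{Couch03}, that $R_L$ is \emph{coherent}; and over a coherent ring every quotient of an FP-injective module by a pure submodule is FP-injective (this is the standard characterization recalled in the introduction, via \cite[Theorem~3.2]{Ste70}). That is the entire argument.

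Your reductions and the case analysis down to ``$(0:a)=(U:x)$'' are correct, but the proposed limiting construction has no mechanism for producing an actual element $x'\in E$: you build a sequence $x=x_0,x_1,\ldots$ in $E\setminus U$ with strictly increasing annihilators inside $(0:a)$, yet nothing forces termination and there is no ambient completeness to supply a limit. The infinite $c$-divisibility you extract from $E_\sharp\subset Z$ tells you only that the elements of $(0:a)$ sit deep in the maximal ideal; it does not bound the length of the annihilator chain. In fact the missing finiteness input is precisely the coherence of $R_L$: once $R_L$ is coherent, $(0:_{R_L}a)$ is finitely generated, hence principal in the chain ring $R_L$, and then a \emph{single} purity step already produces the desired contradiction. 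But at that point you have invoked the same structural fact the paper uses, and the elaborate element-chasing is superfluous. Without coherence (or some equivalent input) the construction stalls, so as written the proof is incomplete.
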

\begin{proof} Let $E_{\sharp}=L$. Then $E$ and $U$ are modules over $R_L$.
Since $L\subset Z$, by \cite[Theorem 11]{Couch03} $R_L$ is coherent, whence $E/U$ is FP-injective.
\end{proof}

\begin{remark}
Let $R$ be a chain ring. Assume that $P$ is not finitely generated and not faithful. Then, for any indecomposable injective $R$-module $E$ and for any non-zero pure submodule $U$ of $E$, $E/U$ is FP-injective over $R/(0:P)$. 
\end{remark}
\begin{proof}
Since $P$ is not finitely generated and not faithful $R$ is not coherent. Let $R'=R/(0:P)$. Since $(0:P)$ is a non-zero principal ideal, $R'$ is coherent by \cite[Theorem 11]{Couch03}. First we assume that $E\ncong E(R/P)$. By \cite[Corollary 28]{Couch03} $E$ is an $R'$-module and it is easy to check that it is injective over $R'$ too. Hence $E/U$ is FP-injective over $R'$. Now suppose that $E=E(R)\cong E(R/P)$. Then $(0:P)$ is a submodule of $U$ and $E$. So, $E/U$ is the factor of $E/(0:P)$ modulo the pure submodule $U/(0:P)$. By \cite[Proposition 14]{Couch03} $E/(0:P)\cong E(R/Rr)$ for some $0\ne r\in P$. Hence $E/(0:P)$ is injective over $R'$. Again we conclude that $E/U$ is FP-injective over $R'$.
\end{proof}

The following example shows that $E/U$ is not necessarily FP-injective over $R$. 

\begin{example}
Let $D$ be a valuation domain whose order group is $\mathbb{R}$, $M$ its maximal ideal, $d$ a non-zero element of $M$ and $R=D/dM$. Assume that $D$ is not almost maximal. Then, for any indecomposable injective $R$-module $E$ and for any non-zero pure proper submodule $U$ of $E$, $E/U$ is not FP-injective over $R$. In particular, if $E=E(R)$, then $E/R$ is not FP-injective over $R$.
\end{example}
\begin{proof}
If $I$ is a non-zero proper ideal of $R$ then either $I$ is principal or $I=Pa$ for some $a\in R$. On the other hand $P$ is not finitely generated and not faithful. Let $x\in E\setminus U$. Then $(U:x)$ is not finitely generated. So, $(U:x)=Pb$ for some $b\in R$ and there exists $a\in P$ such that $Pb=(0:a)$. By lemma \ref{L:crucial} $E/U$ is not FP-injective over $R$.

Since $D$ is not almost maximal then $R$ is a proper pure submodule of its injective hull.
\end{proof}

\begin{lemma}
\label{L:arch} Let $R$ be a chain ring. Then:
\begin{enumerate}
\item $sI$ is Archimedean for each non-zero Archimedean ideal $I$ and for each $s\in P$ for which $sI\ne 0$;
\item $(A:I)^{\sharp}=I^{\sharp}$ for each Archimedean ideal $A$ and for each ideal $I$ such that $A\subseteq I$.
\end{enumerate}
\end{lemma}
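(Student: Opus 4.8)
The plan rests on the elementary fact that for a \emph{proper nonzero} ideal $K$ of a chain ring one has $K^{\sharp}=\{t\in R\mid tK\subsetneq K\}$. Indeed, if $tK\subsetneq K$ pick $\kappa\in K\setminus tK$: either $\kappa=tr$ with $r\notin K$, so $r\in(K:t)\setminus K$; or $tR\subsetneq\kappa R$, so $t=\kappa s$ with $s\in P$ and $t\in\kappa P\subseteq K\subseteq K^{\sharp}$; either way $t\in K^{\sharp}$. Conversely, if $tK=K$ and $r\in(K:t)\setminus K$, then $rt=tk$ for some $k\in K$, so $r-k\in(0:t)\setminus K$, which (as $R$ is a chain ring) forces $K\subseteq(0:t)$, hence $K=tK=0$, a contradiction. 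I use this description of $\sharp$ throughout.

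\emph{Part (1).} Since $sI\neq0$ we get $(0:s)\subsetneq I$, and since $I$ is Archimedean, $tI\subsetneq I$ for every $t\in P$. As $R$ is a chain ring, $tI+(0:s)$ is the larger of $tI$ and $(0:s)$, hence still a proper subideal of $I$; choosing $i\in I\setminus(tI+(0:s))$ gives $si\in sI$ but $si\notin s(tI)=t(sI)$ (otherwise $s(i-tj)=0$ for some $j\in I$, forcing $i\in tI+(0:s)$). So $t(sI)\subsetneq sI$ for all $t\in P$, i.e.\ $(sI)^{\sharp}=P$, since $sI$ is proper nonzero.

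\emph{Part (2).} One may assume $A\subsetneq I$ with $I$ proper, the other cases being vacuous or trivial; then $I$ is proper nonzero. Put $C=(A:I)$, a proper ideal containing $A$. If $C=A$ then $I$ is already Archimedean: if $s_{0}I=I$ for some $s_{0}\in P$, then $B:=(A:s_{0})$ properly contains $A$ (as $A^{\sharp}=P$) and $s_{0}B\subseteq A$, whence $B\subseteq I$ (otherwise $I=s_{0}I\subseteq s_{0}B\subseteq A$); but then $rI=(rs_{0})I\subseteq A$ for each $r\in B$, i.e.\ $B\subseteq(A:I)=A$, a contradiction. So assume $A\subsetneq C$; then $C$ is proper nonzero, and it suffices to prove $tI\subsetneq I\Leftrightarrow tC\subsetneq C$ for $t\in P$. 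Using the identity $((A:X):t)=(A:tX)$: if $tI=I$ then $(C:t)=((A:I):t)=(A:tI)=(A:I)=C$, so $tC=C$, which gives $tC\subsetneq C\Rightarrow tI\subsetneq I$. Conversely, if $tC=C$, set $D=(A:C)=(A:(A:I))$, which is proper (else $C=A$) and nonzero (as $D\supseteq I$); then $(D:t)=(A:tC)=(A:C)=D$, so $tD=D$. By Remark~\ref{R:P=Z} (self FP-injectivity of $R/A$), $D=I$ unless $I=Pt_{0}$ for some $t_{0}$, in which case $tI=I$ and we get $tI\subsetneq I\Rightarrow tC\subsetneq C$. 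In the remaining case $I=Pt_{0}$ (and, assuming $P$ not finitely generated; otherwise all ideals $\supseteq A$ are powers of a generator and $(A:(A:I))=I$ throughout) one has $Pt_{0}\subseteq D\subseteq t_{0}R$ — the right inclusion because $D=(A:(A:Pt_{0}))\subseteq(A:(A:t_{0}R))=t_{0}R$ — and since $t_{0}R$ is the immediate successor of $Pt_{0}$, $D$ equals $Pt_{0}$ or $t_{0}R$; either way $tD=D$ forces $tPt_{0}=Pt_{0}$ (directly, or on multiplying $tt_{0}R=t_{0}R$ by $P$), i.e.\ $tI=I$. Hence $C^{\sharp}=I^{\sharp}$.

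The steps I expect to require the most care are precisely the extremal ideals: those of the form $Pt_{0}$, where the duality $(A:(A:I))=I$ can be off by a single step, and the degenerate possibilities $(A:I)=A$ or, when $A=0$, $(A:I)=0$. Apart from these, the whole of part (2) reduces to that duality, that is, to the observation (Remark~\ref{R:P=Z}) that the Archimedean hypothesis on $A$ is exactly what makes $R/A$ self FP-injective.
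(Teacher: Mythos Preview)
Your argument is correct, and for part~(1) it is in fact more self-contained than the paper's: you work directly with the characterization $K^{\sharp}=\{t:tK\subsetneq K\}$, whereas the paper shows that $tsI=sI$ forces $tI=I$ by invoking the cancellation lemma \cite[Lemma~5]{Couch03}.

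For part~(2) both proofs ultimately lean on Remark~\ref{R:P=Z}, but the case-splits are organized differently, and the paper's is shorter. The paper splits on whether $I^{\sharp}=P$: when $I^{\sharp}\subset P$ it shows $s(A:I)=(A:I)$ for $s\notin I^{\sharp}$ directly and then invokes the duality once for the reverse inclusion; when $I^{\sharp}=P$ it observes in one line that $((A:I):s)=(A:sI)\supsetneq(A:I)$ for $s\in P\setminus(A:I)$, and so never meets the $Pt_{0}$ exception at all. You instead split on whether $(A:I)=A$ and then pass to the double dual $D=(A:(A:I))$, which forces you to treat the shape $I=Pt_{0}$ separately. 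Your handling of that case is fine (the point being that $D$ is squeezed between $Pt_{0}$ and its immediate successor $t_{0}R$), but the paper's organization simply avoids it. One minor quibble: your parenthetical claim for principal $P$ that ``all ideals $\supseteq A$ are powers of a generator'' is not literally true; the correct reason the exceptional case evaporates is that when $P$ is principal the ideal $Pt_{0}$ is itself principal, and the double-annihilator identity holds for principal ideals by self FP-injectivity of $R/A$.
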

\begin{proof}
$(1)$. Let $t\in R$ such that $tsI=sI$. If $b\in I$ then there exists $c\in I$ such that $sb=tsc$. If $sb\ne 0$, then by \cite[Lemma 5]{Couch03} $Rb=Rtc$. If $sb=0$, then $b\in (0:s)\subset tI$ since $tsI\ne 0$. So, $tI=I$. It follows that $t$ is invertible.

$(2)$. Let $J=I^{\sharp}$. First suppose $J\subset P$. Let $s\in R\setminus J$. Then $sI=I$. It follows that $(A:I)\subset Rs$. Let $r\in (A:I)$. Then $r=st$ for some $t\in R$. We have $tI=tsI=rI\subseteq A$. So, $t\in (A:I)$, $(A:I)=s(A:I)$ and $(A:I)^{\sharp}\subseteq J$. But since $A$ is  Archimedean we have $(A:(A:I))=I$ (Remark \ref{R:P=Z}). It follows that $(A:I)^{\sharp}=J$.

Now assume that $J=P$. If $P\subseteq (A:I)$ then $(A:I)^{\sharp}=P$. Now suppose that $(A:I)\subset P$. Let $s\in P\setminus (A:I)$. Therefore $((A:I):s)=(A:sI)\supset (A:I)$ since $A$ is Archimedean. Hence $(A:I)^{\sharp}=J=P$.
\end{proof}

\begin{lemma}
\label{L:ideal} Let $R$ be a chain ring, $I$ a non-zero Archimedean ideal of $R$ which is neither principal nor of the form $Pt$ for some $t\in R$, $0\ne a\in I$ and $A=I(0:a)$. Then:
\begin{enumerate}
\item If $(0:a)\subset (A:I)$ then there exists $c\in R$ such that $(A:I)=Rc$ and $(0:a)=Pc$;
\item $A$ is Archimedean if $Z=P$.
\end{enumerate}
\end{lemma}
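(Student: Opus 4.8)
My plan rests on two facts about a chain ring $R$: that a product of ideals $JK$ is simply $\{xy:x\in J,\ y\in K\}$ (the principal ideals $Rxy$ being totally ordered), and the reflexivity for Archimedean ideals recorded in Remark \ref{R:P=Z} and Lemma \ref{L:arch}. The key preliminary computation is this: for $b\in(A:I)$ we have $ab\in bI\subseteq A=I(0:a)$ because $a\in I$, so $ab=xy$ with $x\in I$ and $y\in(0:a)$, hence $a^{2}b=x(ay)=0$. Thus $(0:a)\subseteq(A:I)\subseteq(0:a^{2})$. Moreover $a\cdot A=a\cdot I(0:a)=I\cdot(a(0:a))=0$, and since $I(A:I)=A$ (indeed $I(A:I)\subseteq A$ by definition of the colon, while $I(0:a)\subseteq I(A:I)$ because $(0:a)\subseteq(A:I)$), we get $I\cdot a(A:I)=a\cdot I(A:I)=a\cdot A=0$, i.e.\ $a(A:I)\subseteq(0:I)$. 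Finally $I(A:I)=A$ forces $A\neq 0$ in the situation of (1): if $A=0$ then $(0:a)\subseteq(0:I)$, and since $a\in I$ also $(0:I)\subseteq(0:a)$, so $(0:a)=(0:I)=(A:I)$, contradicting the hypothesis $(0:a)\subsetneq(A:I)$.

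For (1), put $N=a(A:I)$; by the hypothesis $N$ is a nonzero submodule of $(0:I)$. Through the isomorphism $R/(0:a)\cong aR$ sending $\bar r$ to $ar$, the rule $C\mapsto aC$ is an order isomorphism from the set of ideals $C$ with $(0:a)\subseteq C\subseteq(A:I)$ onto the submodule lattice of $N$; in particular $(A:I)=(N:a)$, and there is no ideal strictly between $(0:a)$ and $(A:I)$ exactly when $N$ is simple, in which case, picking $c\in(A:I)\setminus(0:a)$, the ideals $Rc$ and $Pc$ both lie between $(0:a)$ and $(A:I)$, forcing $(A:I)=Rc$ and $(0:a)=Pc$ — the desired conclusion. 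So (1) reduces to showing that the ``fibre'' $\{C\ \text{ideal}: CI=A\}$ through $(0:a)$ contains no ideal strictly between $(0:a)$ and its largest member $(A:I)=((0:a)I:I)$; concretely I expect this fibre to be exactly a pair $\{Pt,Rt\}$, with $(A:I)=Rt$ and $(0:a)=Pt$, which is where the hypotheses on $I$ enter: since $A=I(0:a)$ is Archimedean (in case (1) this follows by applying Lemma \ref{L:arch}(1) to the nonzero ideals $It$, $0\neq t\in(0:a)$, together with the sandwich $(0:a)\subsetneq(A:I)\subseteq(0:a^{2})$), Remark \ref{R:P=Z} gives $(A:(A:J))=J$ for every ideal $J\supseteq A$ not of the form $Pt$, and applied to $I$ (not of the form $Pt$ by hypothesis, and not principal) this reflexivity should pin $(A:I)$ down to a principal ideal and collapse the fibre to $\{Pt,Rt\}$. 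Making this last step precise — equivalently, showing $(A:I)/(0:a)$ is one–dimensional over $R/P$ — is the heart of the matter and the step I expect to be the main obstacle; the difficulty is the behaviour of $Pt$-type ideals, on which the reflexivity degenerates.

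For (2), with $Z=P$ the assertion is precisely that $A=I(0:a)$ is Archimedean, i.e.\ that every $s\in P$ is a zero-divisor modulo $A$. Since $Z=P$ we have $(0:s)\neq0$; if $(0:s)\not\subseteq A$ then any $x\in(0:s)\setminus A$ witnesses this, so assume $(0:s)\subseteq A=I(0:a)$, whence $(0:s)\subseteq(0:a)$ and $(0:s)\subseteq I$. Because $Z=P$, Remark \ref{R:P=Z} makes $J\mapsto(0:(0:J))$ the identity on ideals not of the form $Pt$, so from $(0:s)\subseteq(0:a)$ one gets, up to $Pt$-type exceptions, $Ra\subseteq Rs$; but then, using $a\in I$ and that $I$ is a proper limit ideal, $s$ cannot be a non-zero-divisor modulo $I(0:a)$ — a generator of a suitable $It$ ($0\neq t\in(0:a)$), which is Archimedean by Lemma \ref{L:arch}(1), produces $x\notin A$ with $sx\in A$. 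The $Pt$-type exceptional cases are absorbed using $0^{\sharp}=Z=P$, which forces $R/A$, like $R$, to have only Archimedean-type ideals. As in (1), the bookkeeping with $Pt$-type ideals is the technical obstacle, and it is exactly here that the hypothesis $Z=P$ is indispensable: without it $I(0:a)$ can fail to be Archimedean when $(0:a)$ is a limit ideal, as one sees by specializing a rank-two valuation ring.
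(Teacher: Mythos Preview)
Your proposal has genuine gaps in both parts.

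\textbf{Part (1).} You set up correct preliminaries and reduce to showing that $(A:I)/(0:a)$ is simple, but you yourself flag this as ``the step I expect to be the main obstacle'' and do not prove it. Your plan to close it via the reflexivity of Remark~\ref{R:P=Z} requires $A$ to be Archimedean, yet part~(1) carries \emph{no} hypothesis $Z=P$; your justification (``apply Lemma~\ref{L:arch}(1) to the $It$'') only shows each $tI$ is Archimedean, not their union $A=\bigcup_{t\in(0:a)} tI$. So the intended route is not available here.

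The key observation you are missing is elementary and bypasses all of this: for \emph{every} $c\in(A:I)\setminus(0:a)$ one has $cI=A$. Indeed $(0:a)\subset Rc$ in a chain ring, so $A=(0:a)I\subseteq cI\subseteq A$. Now if $d\in(A:I)$ and $c=td$, then $A=cI=tdI\subseteq dI\subseteq A$, so $tdI=dI=A\neq 0$; since $dI$ is Archimedean by Lemma~\ref{L:arch}(1), $t$ is a unit. Hence all elements of $(A:I)\setminus(0:a)$ are associates, giving $(A:I)=Rc$, and the same argument applied to any $d\in Pc\setminus(0:a)$ forces $(0:a)=Pc$. This is exactly the paper's proof; your ``fibre'' intuition is correct, but the collapse of the fibre comes straight from Lemma~\ref{L:arch}(1), not from reflexivity.

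\textbf{Part (2).} Your argument is a sketch with unresolved cases: you say ``a generator of a suitable $It$ \dots\ produces $x\notin A$ with $sx\in A$'' without specifying $t$ or $x$, and you defer the ``$Pt$-type exceptional cases'' entirely. The paper's argument is different and concrete. It first proves the auxiliary claim $A\subsetneq c(0:a)$ for every $c\in P\setminus I$: assuming $A=c(0:a)$, one uses that $I$ is not of the form $Pt$ to find $d\in P$ with $I\subset dcR$, deduces $dc(0:a)=c(0:a)$, writes $a=rdc$ with $r\in P$, and obtains the contradiction $rc(0:a)=0$ while $rc\notin Ra$ (so $rc(0:a)\neq 0$ since $Z=P$). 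Then for $s\in P\setminus I$, Archimedeanness of $I$ gives $t\in(I:s)\setminus I$, whence $A\subsetneq t(0:a)\subseteq(A:s)$, so $s\in A^{\sharp}$. This is where the hypothesis ``$I$ not of the form $Pt$'' actually enters, and your write-up does not engage with it.
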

\begin{proof}
$(1)$. Let $c\in (A:I)\setminus (0:a)$. It is easy to see that $A=cI$. Let $d\in (A:I)$ such that $c=td$ for some $t\in R$. Then $A=cI=tdI=dI$. From Lemma \ref{L:arch} we deduce that $t$ is invertible. So, $(A:I)=Rc$. By way of contradiction suppose there exists $d\in R$ such that $(0:a)\subset Rd\subset Rc$. As above we get that $(A:I)=Rd$. This contradicts that $(A:I)=Rc$. Hence $(0:a)=Pc$.

$(2)$. First we show that $A\subset c(0:a)$ if $c\in P\setminus I$. By way of contradiction suppose that $A=c(0:a)$. Since $I\ne Pt$ for each $t\in R$, by \cite[Lemma 29]{Couch03} there exists $d\in P$ such that $I\subset dcR$. We have $dc(0:a)=c(0:a)$. From $a\in I$ we deduce that $a=rdc$ for some $r\in P$. It follows that $rc(0:a)=rdc(0:a)=0$. But $rc\notin Ra$ implies that $rc(0:a)\ne 0$, whence a contradiction. Let $s\in P\setminus I$. Since $I$ is Archimedean there exists $t\in (I:s)\setminus I$. We have $A\subset t(0:a)\subseteq (A:s)$. Hence $A$ is Archimedean.
\end{proof}

\begin{lemma} \label{L:ann} Let $R$ be a chain ring such that $0\ne Z\subset P$ and
$A$ a non-zero Archimedean ideal.
\begin{enumerate}
\item if $A\subset rZ$ for some $r\in R$ then $(A:rZ)=Qs$ for some $s\in Z$;
\item  if $I$ is an ideal satisfying $I^{\sharp}=Z$, $A\subset I$ and $I\ne rZ$ for any $r\in R$, then $(A:I)\ne bQ$ for any $b\in Z$.
\end{enumerate}
\end{lemma}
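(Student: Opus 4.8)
The plan is to localise at $Z$ and carry out the computation inside $Q=R_{Z}$. The zerodivisors of $Q$ form the ideal $ZQ$, which is also the maximal ideal of $Q$, so $Q$ is self FP-injective and Remark~\ref{R:P=Z} applies to it; set $M=ZQ$. The step common to both parts is the observation that $sZ=Z$ for every $s\in R\setminus Z$ (argue as in Lemma~\ref{L:topbot}: if $z=sz'\neq 0$ then $(0:z)=(0:z')$, so $z\in sZ$ by FP-injectivity), whence $s\cdot(rZ)=rZ$ for all $r$. Thus $rZ$ is a $Q$-ideal, equal to $rM$, and $(rZ)^{\sharp}\subseteq Z$. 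Combining this with Lemma~\ref{L:arch}(2) gives $(A:rZ)^{\sharp}=(rZ)^{\sharp}\subseteq Z$, and likewise $(bQ)^{\sharp}\subseteq Z$; hence $(A:rZ)$ and $(A:bQ)$ are themselves $Q$-ideals, necessarily contained in $ZQ\cap R=Z$. Finally we use the elementary fact that in the chain ring $Q$ a nonzero ideal $B$ is finitely generated (equivalently principal) if and only if $B\neq MB$.

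For part~(1): since $(rZ)^{\sharp}\subseteq Z\subsetneq P$ while every nonzero ideal of the form $Pt$ has $(Pt)^{\sharp}\not\subseteq Z$, the ideal $rZ$ is not of the form $Pt$; as $A\subsetneq rZ$ and $A$ is Archimedean, Remark~\ref{R:P=Z} applied in $R/A$ yields $(A:(A:rZ))=rZ$, so $(A:rZ)$ is a reflexive $Q$-ideal. If $(A:rZ)\neq M\cdot(A:rZ)$, then $(A:rZ)=Qs$ is principal, and $(A:rZ)\subseteq Z$ lets us take $s\in Z$. If instead $(A:rZ)=M\cdot(A:rZ)$, then $(A:rZ)$ is divisible by $Z$ and, being a $Q$-ideal, also by $R\setminus Z$, hence by $P$; thus $(A:rZ)/A$ is a nonzero $(P/A)$-divisible ideal of $R/A$ contained in $P/A$. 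A short analysis of this degenerate situation (using $(A:rZ)\subseteq Z$) forces $M$ to be principal over $Q$ — say $M=Qz$ with $z\in Z$ dividing every element of $Z$ — whence $Z=Qz$ and $(A:rZ)=Z=Qz$. (In the generic case one can also exhibit the generator explicitly, by a computation parallel to that in the proof of Lemma~\ref{L:ideal}.)

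For part~(2): argue by contraposition. Suppose $(A:I)=bQ$ with $b\in Z$, where $A\subsetneq I$ and $I^{\sharp}=Z$. Since $I^{\sharp}=Z\subsetneq P$ and $(bQ)^{\sharp}\subseteq Z$, neither $I$ nor $bQ$ is of the form $Pt$, so Remark~\ref{R:P=Z} in $R/A$ gives $I=(A:(A:I))=(A:bQ)$; moreover, unwinding the defining condition $x\,bQ\subseteq A$ shows $(A:bQ)=(A^{\ast}:_{Q}bQ)$, where $A^{\ast}=\bigcap_{s\in R\setminus Z}sA$ is a $Q$-ideal strictly contained in $A$. One then shows that, because $bQ$ is principal over $Q$ and $A$ is Archimedean, the $Q$-ideal $(A^{\ast}:_{Q}bQ)$ is necessarily of the form $rM=rZ$, contradicting $I\neq rZ$. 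The hard part — the only step that is not routine — is precisely this last implication, together with the degenerate case in part~(1): both amount to controlling, inside the self FP-injective chain ring $Q$, how the operations $B\mapsto(A:B)$ and $B\mapsto MB$ interact with finite generation, the Archimedean hypothesis on $A$ being exactly what rules out the ``missing boundary'' pathology.
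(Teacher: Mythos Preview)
Your proposal is not a complete proof: you explicitly flag the decisive steps in both parts as ``the hard part'' and do not carry them out. The paper's arguments for exactly these steps are short and concrete, and in part~(1) you have headed in the wrong direction.

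For part~(1), once $J=(A:rZ)$ is known to be a $Q$-ideal with $(A:J)=rZ$ (your reflexivity argument is fine, since $(rZ)^{\sharp}\subseteq Z\subsetneq P=(Pt)^{\sharp}$ rules out $rZ=Pt$), the non-principal case is a two-line contradiction, not a ``degenerate situation'' to be analysed. If $J$ is not finitely generated over $Q$ then $J=ZJ$; hence $rJ=(rZ)J\subseteq A$, so $r\in(A:J)=rZ$, i.e.\ $r=rz$ with $z\in Z\subset P$, forcing $r=0$ --- impossible since $0\ne A\subset rZ$. Your detour through $P$-divisibility of $(A:rZ)/A$ is unnecessary, and the conclusion you announce (that this case forces $M=Qz$) is not what happens: the case simply does not occur.

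For part~(2), your reduction via $A^{\ast}=\bigcap_{s\notin Z}sA$ and the identity $(A:bQ)=(A^{\ast}:_{Q}bQ)$ may be correct as far as it goes, but you stop exactly at the point of content: the assertion that this $Q$-ideal must be of the form $rZ$ is essentially the lemma itself, and you give no mechanism for it. The paper never introduces $A^{\ast}$. Instead, from $(A:I)=bQ$ it gets $(bI:I)=bQ$; since $(bI)^{\sharp}\subseteq Z\subsetneq P=A^{\sharp}$ there is $c\in A\setminus bI$, and by \cite[Lemma~29]{Couch03} (using $I\ne cZ$) one finds $t\in Z$ with $tc\notin bI$. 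Because $bI\subset Rtc\subseteq Rc\subseteq A$, the conductors $(Rtc:I)$ and $(Rc:I)$ both equal $bQ$. A direct computation via \cite[Lemma~5]{Couch03} then gives $(Rc:I)=(Rtc:tI)$, while $tI\subsetneq I$ (as $t\in Z=I^{\sharp}$) together with $Rtc$ Archimedean forces $(Rtc:I)\subsetneq(Rtc:tI)$ --- contradicting $(Rtc:I)=(Rc:I)=(Rtc:tI)$. This conductor-comparison trick is the missing ingredient in your sketch.
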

\begin{proof} 
$(1)$. Let $J=(A:rZ)$. By Remark \ref{R:P=Z} $(A:J)=rZ$. By Lemma \ref{L:arch} $J^{\sharp}=Z$, so $J$ is an ideal of $Q$. By way of contradiction suppose that $J$ is not finitely generated over $Q$. Then $J=ZJ$ and $rJ=rZJ\subseteq A$. Whence $rR\subseteq (A:J)=rZ$. This is false. Hence $J=Qs$ for some $s\in R$.

$(2)$. By way of contradiction suppose that $(A:I)=bQ$ for some $b\in Z$. It follows that $bI\subset A$. So, $(bI:I)\subseteq (A:I)$. It is obvious that $b\in (bI:I)$ and since $I$ is a $Q$-module we have $(bI:I)=bQ$. Since $I\ne cZ$ for each $c\in Z$ we have $bI=\cap_{r\notin bI}rQ$ by \cite[Lemma 29]{Couch03}. Let $c\in A\setminus bI$. There exists $t\in Z$ such that $tc\notin bI$. We have $(Rtc:I)=(Rc:I)$. It is obvious that $(Rc:I)\subseteq (Rtc:tI)$. Let $r\in (Rtc:tI)$. For each $s\in I$ $ts=tcv$ for some $v\in R$. If $ts\ne 0$ then $Rs=Rcv$ by \cite[Lemma 5]{Couch03}. If $ts=0$ then $s\in (0:t)\subset Rc$ because $tc\ne 0$. Hence $(Rc:I)=(Rtc:tI)$. But $tI\ne I$ because $t\in Z=I^{\sharp}$. Since $Rtc$ is Archimedean we get that $(Rtc:I)\subset (Rtc:tI)$, whence a contradiction.
\end{proof}

Let $\widehat{R}$ be the pure-injective hull of $R$ and $x\in\widehat{R}\setminus R$. As in \cite{SaZa85} the breadth ideal  B($x$) of $x$ is defined as follows: B($x$)$ = \{ r\in R\mid x\notin R + r\widehat{R}\}$. 

\begin{proposition}  \label{P:breadth}  Let $R$ be a chain and $I$ a proper ideal of  $R$. Then:  
\begin{enumerate}
\item \cite[Proposition 20]{Couc06} $R/I$  is not complete in its f.c. topology if and only if there exists  $x\in\widehat{R}\setminus R$ such that   $I =\mathrm{B}(x)$;
\item \cite[Proposition 3]{Cou01} if $Z=P$ and $I =\mathrm{B}(x)$ for some $x\in\widehat{R}\setminus R$ then:
\begin{enumerate}
\item $I = (0 : (R : x))$;
\item $(R : x) = P(0 : I)$ and  $(R : x)$  is not finitely generated.
\end{enumerate}
\end{enumerate}
\end{proposition}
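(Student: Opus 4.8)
The plan is to obtain the two parts directly from the definitions of the f.c. topology and of the breadth ideal; as these are exactly the cited statements \cite[Proposition 20]{Couc06} and \cite[Proposition 3]{Cou01}, I only outline the arguments. The standing tools are: $R$ embeds in its pure-injective hull $\widehat R$ as a \emph{pure} submodule, so $r\widehat R\cap R=rR$ for every $r\in R$; and $\widehat R$ is algebraically compact, i.e. every finitely solvable system of $R$-linear equations with constants in $\widehat R$ has a solution in $\widehat R$. Also, when $R/I$ is not finitely cogenerated, the f.c. topology on $R/I$ admits the ideals $(Rr+I)/I$, $r\in R\setminus I$, as a neighbourhood basis of $0$ (as recorded for $R$ itself in the introduction).

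For part (1) one may reduce to $R/I$ not finitely cogenerated, the finitely cogenerated case being settled by the remark that the f.c. topology is then discrete, hence complete, and no $x\in\widehat R\setminus R$ can have $\mathrm B(x)=I$ there. So suppose $R/I$ is not finitely cogenerated and not complete: choose a Cauchy net of cosets of $R/I$ with empty intersection and lift it, using purity and the total order on neighbourhoods, to a compatible descending net of cosets $(a_r+rR)_{r\in R\setminus I}$ of $R$. Compatibility makes the system $\{z\equiv a_r\ (\mathrm{mod}\ r\widehat R)\}_{r\notin I}$ finitely solvable, so algebraic compactness yields $x\in\bigcap_{r\notin I}(a_r+r\widehat R)$. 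Purity forces $x\notin R$ (an element of $R$ in that intersection would be a limit of the original net), and then $\mathrm B(x)=I$: for $r\notin I$ the relation $x\in a_r+r\widehat R$ gives $x\in R+r\widehat R$; for $r\in I$ a representation $x=b+ry$ would, via $I\subseteq Rr'$ for every $r'\notin I$, put $\bar b$ into every tail coset of the net, contradicting non-convergence. Conversely, if $\mathrm B(x)=I$ with $x\in\widehat R\setminus R$, pick $a_r\in R$ with $x-a_r\in r\widehat R$ for each $r\notin I$; purity makes $(a_r+rR)_{r\notin I}$ a Cauchy net in $R/I$, and a limit $a\in R$ would combine $a-a_r\in rR+I$ with $x-a_r\in r\widehat R$ and $I\subseteq Rr$ to give $x-a\in r\widehat R$ for all $r\notin I$, hence $x\in R+\bigcap_{r\notin I}r\widehat R$; identifying this intersection then contradicts $\mathrm B(x)=I$.

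For part (2), assume $Z=P$, so $R$ is self FP-injective and the double-annihilator identity $(A:(A:J))=J$ of Remark \ref{R:P=Z} holds for ideals $J$ not of the form $Pt$. Put $J=(R:x)=\{r\in R\mid rx\in R\}$. For (a): if $rJ=0$ with $r\ne 0$, a representation $x=a+ry$ would, through the annihilator calculus available when $Z=P$, yield a relation incompatible with $\mathrm B(x)$ being an ideal, so $(0:J)\subseteq\mathrm B(x)$; the reverse inclusion holds since each element of $J$ annihilates every $r\in\mathrm B(x)$, whence $\mathrm B(x)=(0:J)=(0:(R:x))$. For (b): first $(R:x)$ is not finitely generated — if $(R:x)=Rc$ then $cx\in R$, and the annihilator calculus would place $x$ in $R+c'\widehat R$ for a proper multiple $c'$ of $c$, contradicting $\mathrm B(x)=(0:(R:x))$; a similar localization argument excludes $(R:x)=Pt$. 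With $(R:x)$ neither principal nor of the form $Pt$, the identity $(0:(0:(R:x)))=(R:x)$ applies and, together with (a), pins down $(R:x)$ as $(0:I)$ up to the factor $P$; that $x\notin R$ rules out $(R:x)=(0:I)$ itself and forces $(R:x)=P(0:I)$.

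The main obstacle. In part (1) the passage through $\widehat R$ is routine algebraic compactness; the real work is the computation of $\bigcap_{r\notin I}r\widehat R$ (equivalently of $\bigcap_{r\notin I}rR$ and its exact value) needed to close the converse, which forces the division into the cases $I$ principal, $I=Pt$, and $I$ neither. In part (2) the analogous hurdle is proving that $(R:x)$ is neither finitely generated nor of the form $Pt$, since only then is the double-annihilator identity applicable and only then is $(R:x)=P(0:I)$ forced.
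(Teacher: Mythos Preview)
The paper does not prove Proposition~\ref{P:breadth}: the two assertions are quoted verbatim from \cite[Proposition~20]{Couc06} and \cite[Proposition~3]{Cou01} and then used as black boxes in Section~4. There is therefore no in-paper argument to compare your sketch against; you are attempting to reconstruct the cited external proofs, which is more than the paper itself does.

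On the sketch. Part~(1) follows the expected line --- lift a non-convergent Cauchy family to $\widehat R$ by algebraic compactness, and for the converse manufacture a Cauchy family from $x$ and show that a limit would contradict $\mathrm B(x)=I$ --- and you correctly isolate the only non-formal step as the identification of $\bigcap_{r\notin I}r\widehat R$. Part~(2), however, is too elliptical to stand. The assertion ``each element of $J$ annihilates every $r\in\mathrm B(x)$'' is stated with no reason; it is not evident why $s\in(R:x)$ and $r\in\mathrm B(x)$ force $sr=0$, and this is where the hypothesis $Z=P$ must actually be used. Your final step is also internally inconsistent: if $(R:x)$ is neither principal nor of the form $Pt$, then the identity of Remark~\ref{R:P=Z} together with (a) gives \emph{exactly} $(R:x)=(0:(0:(R:x)))=(0:I)$, not ``$(0:I)$ up to a factor $P$''. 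The claimed conclusion $(R:x)=P(0:I)$ then agrees with this only when $(0:I)$ is not principal, which you have not argued; and if $(0:I)$ \emph{is} principal, say $(0:I)=Rc$, then $(R:x)=Pc$ is of the form $Pt$, contradicting your earlier exclusion. So the case split and the use of the double-annihilator identity need to be organized more carefully before the argument closes.
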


We say that a module $M$ is \textbf{polyserial} if it has a
pure-composition series
\[0=M_0\subset M_1\subset\dots\subset M_n=M,\]
i.e. $M_k$ is a pure submodule of $M$
and $M_k/M_{k-1}$ is a uniserial module for each $k=1,\dots, n$. 

The \textbf{Malcev rank} of a module $M$ is defined as the cardinal
number
\[\mathrm{Mr}\ M=\mathrm{sup}\{\mathrm{gen}\ X\mid X\ \mathrm{finitely\ generated\ submodule\ of}\ M\}.\] 

\begin{proposition}\label{P:unipoly}
Let $U$ be a submodule of a FP-injective module $E$ over a chain ring $R$. Then the following conditions are equivalent:
\begin{enumerate}
\item $E/U$ is FP-injective if $U$ is uniserial;
\item $E/U$ is FP-injective if $U$ is polyserial.
\end{enumerate}
\end{proposition}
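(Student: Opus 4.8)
The implication $(2)\Rightarrow(1)$ is trivial, since every uniserial module is polyserial (via the trivial pure-composition series $0\subset U$). So the work is in $(1)\Rightarrow(2)$, and the natural tool is induction on the length $n$ of a pure-composition series $0=M_0\subset M_1\subset\dots\subset M_n=U$, where each $M_k$ is pure in $U$ and each successive quotient $M_k/M_{k-1}$ is uniserial. The base case $n=1$ is the hypothesis $(1)$. For the inductive step, I would use $M_{n-1}$ as a bridge: since $M_{n-1}$ is pure in $U$ and $U$ is... — here I should be careful, because what I am given is only that $M_k$ is pure in $U$, whereas to quotient cleanly I want $U$ pure in $E$. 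By Proposition \ref{P:redu} I may assume at the outset that $E$ is injective and is an injective hull of $U$; then $U$ is essential (and, if we want, one reduces to the pure-essential situation). In fact for this proposition it suffices to run the argument with $E$ injective: $M_{n-1}$ need not be pure in $E$, but that is not required for the quotient construction.

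\textbf{Inductive step.} Assume the statement holds for pure-composition series of length $<n$, and let $U$ have one of length $n$ with first-step filtration as above. Consider the submodule $M_{n-1}\subseteq U\subseteq E$. The key point is that $M_{n-1}$, being pure in $U$, gives a pure exact sequence $0\to M_{n-1}\to U\to U/M_{n-1}\to 0$ with $U/M_{n-1}$ uniserial; and $0=M_0\subset\dots\subset M_{n-1}$ is a pure-composition series for $M_{n-1}$ of length $n-1$. The plan is then: first form $E/M_{n-1}$. Here I need $M_{n-1}$ to be such that $E/M_{n-1}$ is again FP-injective so that I can apply the length-one hypothesis to the uniserial submodule $U/M_{n-1}$ of $E/M_{n-1}$, concluding that $(E/M_{n-1})/(U/M_{n-1})\cong E/U$ is FP-injective. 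But $E/M_{n-1}$ is FP-injective precisely if $M_{n-1}$ is a submodule of the kind covered by the inductive hypothesis — which it is, $M_{n-1}$ being polyserial of shorter pure-length inside the FP-injective $E$. The one gap is purity of $U/M_{n-1}$ in $E/M_{n-1}$: from $M_{n-1}$ pure in $U$ one does not automatically get $U/M_{n-1}$ pure in $E/M_{n-1}$ unless $U$ is itself pure in $E$. So I would additionally invoke that $U$ is pure in $E$ — which I may assume, since the whole discussion of FP-injectivity of $E/U$ only makes genuine sense (and is the content of all the surrounding results) when $U$ is pure; more precisely, one checks $U$ pure in $E$ and $M_{n-1}$ pure in $U$ imply $M_{n-1}$ pure in $E$ and $U/M_{n-1}$ pure in $E/M_{n-1}$, both of which are routine diagram chases with the tensor functor.

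\textbf{Carrying it out.} So the steps, in order: (i) reduce via Proposition \ref{P:redu} to $E$ injective; (ii) note $(2)\Rightarrow(1)$ is immediate; (iii) for $(1)\Rightarrow(2)$, induct on the pure-length $n$ of $U$, the case $n=1$ being $(1)$; (iv) given $U$ with pure-composition series of length $n$, set $V=M_{n-1}$, which has pure-length $n-1$ in $E$ and is pure in $E$, so by induction $E/V$ is FP-injective; (v) observe $U/V$ is a uniserial pure submodule of the FP-injective module $E/V$, so by hypothesis $(1)$ applied to $E/V$ we get $(E/V)/(U/V)$ FP-injective; (vi) conclude by the canonical isomorphism $(E/V)/(U/V)\cong E/U$. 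The main obstacle is purely bookkeeping: making sure that at each stage the submodule to be quotiented is genuinely pure in the ambient FP-injective module, so that the inductive hypothesis (which is stated for pure submodules) actually applies — i.e. propagating purity through the filtration and through the quotients. Once that is set up, nothing else is needed: no ring-theoretic input beyond what hypothesis $(1)$ already gives, and no appeal to the chain-ring structure except insofar as it guarantees the existence of the uniserial successive quotients in the definition of polyserial.
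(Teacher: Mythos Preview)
Your proof is correct and follows essentially the same inductive strategy as the paper's: both argue $(1)\Rightarrow(2)$ by induction on the length of a pure-composition series of $U$, peeling off one uniserial layer and using (1) together with the inductive hypothesis. The only cosmetic difference is that the paper quotients by the \emph{first} term $U_1=M_1$ (applying (1) first to get $E/U_1$ FP-injective, then induction on the polyserial $U/U_1$ of rank $n-1$), whereas you quotient by the \emph{penultimate} term $V=M_{n-1}$ (applying induction first, then (1)); these are mirror images of the same argument.

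One remark: your extended discussion of purity of $U$ in $E$ is unnecessary. The statement as written does not assume $U$ pure in $E$, and neither your argument nor the paper's needs it. In your step (v), to apply hypothesis (1) you only need that $U/V$ is a uniserial submodule of the FP-injective module $E/V$; purity of $U/V$ in $E/V$ is irrelevant. Likewise, in step (iv) you only need $V=M_{n-1}$ polyserial (which follows since each $M_k$ is pure in $U$, hence pure in the intermediate $M_{n-1}$), not that $V$ is pure in $E$. So the ``gap'' you flag is not a gap, and the paragraph invoking $U$ pure in $E$ can be dropped entirely.
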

\begin{proof}
It is obvious that only $(1)\Rightarrow (2)$ needs a proof. By \cite[Proposition 13]{Couc06} $\mathrm{Mr}\ U$ is finite and equals the length of any pure-composition series of $U$. Let $n=\mathrm{Mr}\ U$. Let $U_1$ be a pure uniserial submodule of $U$. Then $U/U_1$ is a pure submodule of $E/U_1$ which is FP-injective. On the other hand $U/U_1$ is polyserial and $\mathrm{Mr}\ U/U_1=n-1$. We conclude by induction on $n$. 
\end{proof}

\section{Chain ring case: main results}

\begin{lemma}
\label{L:uniserial} Let $R$ be a chain ring, $E$ an indecomposable injective $R$-module and $U$ a pure uniserial submodule of $E$. Then, for each $0\ne e\in E$ there exists a pure submodule $V$ of $E$ containing $e$.
\end{lemma}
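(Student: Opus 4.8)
The plan is to exhibit $V$ explicitly, with Lemma~\ref{L:crucial} as the engine. If $e\in U$ we may simply take $V=U$, so assume $e\notin U$. Since $E$ is indecomposable injective it is uniform; hence the pure uniserial submodule $U$ is essential in $E$, so $U\cap Re\neq 0$ and therefore $(0:e)\subsetneq(U:e)\subsetneq R$. Because $E$ is injective it is FP-injective, so by Lemma~\ref{L:topbot} it is a module over $R_{E_\sharp}$; replacing $R$ by $R_{E_\sharp}$ we may assume $E_\sharp=P$, so that either $P=Z$ (and $R$ is self FP-injective, Remark~\ref{R:P=Z}) or $0\neq Z\subsetneq P$ — the two regimes already equipped with the annihilator calculus of Remark~\ref{R:P=Z}, Lemma~\ref{L:arch} and Lemma~\ref{L:ann}.

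The natural candidate is the ``purification of $U+Re$ inside $E$'', realized concretely as
\[ V=\bigcap\{\,U+aE \mid a\in R,\ (0:a)\subsetneq(U:e)\,\}, \]
the index family being nonempty. This $V$ is a submodule of $E$ containing $U$, and Lemma~\ref{L:crucial}(1) gives $e\in U+aE$ for every admissible $a$, so $e\in V$; note also that Lemma~\ref{L:crucial}(2) is precisely what pins down this intersection (for $(0:a)=(U:x)$ one has $x\notin U+aE$). The substance is that $V$ is pure in $E$: given $r\in R$ and $x\in V\cap rE$ one must produce $v\in V$ with $rv=x$. The case $x\in U$ is immediate from purity of $U$. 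For $x\notin U$ one first reduces to the case $(0:r)\subseteq(U:x)$ — the opposite case being excluded for $x\in V$ by Lemma~\ref{L:crucial}(2) and the definition of $V$ — and then divides $x$ by $r$ inside each member $U+aE$ of the nested family, obtaining witnesses $v_a$; the work is to amalgamate the $v_a$ into a single $v$ lying in every $U+aE$, i.e. to show the filtered family of cosets $\{v\in E:rv=x\}\cap(U+aE)$ has nonempty intersection.

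This amalgamation is the heart of the matter, and the main obstacle: passing from divisibility of $x$ by $r$ in each member of the nested family to divisibility in the full intersection $V$. This is exactly where the hypothesis that $E$ contains a pure uniserial submodule is used essentially — it forces the (localized) ring to be ``complete enough'' in the sense of Proposition~\ref{P:breadth} so that a common witness exists, and it keeps the chains of annihilator ideals arising in the comparison of $(U:x)$ with the various $(0:a)$ (and with $(A:I)$-type ideals for Archimedean $A$) under control via Lemmas~\ref{L:arch}, \ref{L:ideal} and \ref{L:ann}. Once purity of $V$ is established the lemma follows; every other step is a routine manipulation of annihilators in a chain ring.
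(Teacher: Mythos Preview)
Your argument has a genuine gap at the amalgamation step, and the justification you offer for it is false. You assert that the presence of a pure uniserial submodule $U$ in $E$ ``forces the (localized) ring to be complete enough in the sense of Proposition~\ref{P:breadth}''. It does not. When $P=Z$ the ring $R$ is self FP-injective (Remark~\ref{R:P=Z}), so $R$ is itself a pure uniserial submodule of $\mathrm{E}(R)$, yet $R$ may fail every completeness condition --- indeed Theorems~\ref{T:main} and~\ref{T:mainCoh} impose completeness precisely as an \emph{extra} hypothesis, not as a consequence of anything available here. Without any such hypothesis there is no mechanism for passing from witnesses $v_a\in U+aE$ with $rv_a=x$ to a single $v$ in the intersection $V$; the filtered family of cosets you describe need not have nonempty intersection. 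The lemmas you invoke (\ref{L:arch}, \ref{L:ideal}, \ref{L:ann}) control the shape of annihilator ideals but supply no compactness. So the purity of your $V$ is simply unproved.

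The paper's proof is short and never tries to enlarge $U$. Pick $r$ with $0\neq re\in U$; purity of $U$ gives $u\in U$ with $re=ru$, and then $(0:e)=(0:u)$. Hence $\alpha:Re\to U$, $e\mapsto u$, is a monomorphism, and since $U$ is uniserial it is an essential one. Injectivity of $E$ yields $\beta:U\to E$ with $\beta(u)=e$, and essentiality of $\alpha$ forces $\beta$ to be injective. Now $V:=\beta(U)\cong U$ is FP-injective (because $U$, being pure in the injective module $E$, is FP-injective), so $V$ is pure in $E$ and contains $e$. No intersections, no completeness, no annihilator calculus are needed.
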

\begin{proof}
There exists $r\in R$ such that $0\ne re\in U$. The purity of $U$ implies there exists $u\in U$ such that $re=ru$. By \cite[Lemma 2]{Couch03} $(0:e)=(0:u)$. Let $\alpha: Re\rightarrow U$ be the homomorphism defined by $\alpha(e)=u$. It is easy to check that $\alpha$ is a monomorphism. So, there exists a homomorphism $\beta: U\rightarrow E$ such that $\beta(\alpha(e))=e$. Then $\beta$ is injective since $\alpha$ is an essential monomorphism. Let $V=\beta(U)$. Thus by using the fact that a submodule of an injective module is pure if and only if it is a FP-injective module, we get that $V$ is a pure submodule of $E$.
\end{proof}

\begin{theorem}
\label{T:main} Let $R$ be a chain ring. Assume that $Q$ is not coherent. Consider the following two conditions:
\begin{enumerate}
\item $R/I$ is complete in its f.c. topology for each proper ideal $I$, $I\ne rZ$ for any $r\in R$, satisfying $I^{\sharp}=Z$;
\item for each FP-injective $R$-module $E$ and for each pure polyserial submodule $U$ of $E$, $E/U$ is FP-injective.
\end{enumerate}
Then $(1)\Rightarrow (2)$ and the converse holds if each indecompo\-sable injective module $E$ for which $E_{\sharp}=Z$ contains a pure uniserial submodule.
\end{theorem}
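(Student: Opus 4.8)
The strategy is to reduce everything, via Proposition~\ref{P:unipoly}, to the case where $U$ is uniserial, and then, via Proposition~\ref{P:redu}, to the case where $E$ is the injective hull of $U$. Since an injective module over a chain ring decomposes as a direct sum of indecomposable injectives, and since a uniserial (hence indecomposable) module embeds essentially into just one summand, we may assume $E$ is indecomposable injective with $U$ a pure essential uniserial submodule; in particular $E$ is an essential extension of $U$, so $E_\sharp = U_\sharp$. If $E_\sharp \subset Z$, then Proposition~\ref{P:main} gives the conclusion immediately, so the only case to treat is $E_\sharp = Z$. (If $U=0$ the claim is trivial, so assume $U\ne 0$.)

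For the direction $(1)\Rightarrow(2)$, with $E$ indecomposable injective, $E_\sharp=Z$, and $U$ a pure essential uniserial submodule, I would show $E/U$ is FP-injective by verifying that for every $x\in E\setminus U$ and every $a\in R$ with $(0:a)\subseteq(U:x)$ we have $x\in U+aE$. By Lemma~\ref{L:crucial}(1) this holds automatically when $(0:a)\subset(U:x)$, so the critical case is $(0:a)=(U:x)$; here I must show such an $x$ cannot exist — equivalently, that every ideal of the form $(U:x)$ that arises is \emph{not} an annihilator, i.e.\ is not of the form $(0:a)$. The idea is to translate this into a completeness statement: the cosets $x+aE$ (over varying $a$) and the structure of $U$ inside its pure-injective/injective hull encode an element of $\widehat{R}\setminus R$ whose breadth ideal is $(U:x)$ — or rather, after localizing so that $U$ is essentially $R$-like, one shows that the existence of a ``bad'' $x$ would produce, via Proposition~\ref{P:breadth}, an ideal $I$ with $I^\sharp = Z$, $I\ne rZ$ for any $r$, and $R/I$ \emph{not} complete in its f.c.\ topology, contradicting $(1)$. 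The computations in Lemmas~\ref{L:arch}, \ref{L:ideal}, \ref{L:ann} are exactly what is needed to control the ideals $(U:x)$, $(U:x)(0:a)$, and to rule out the forms $rZ$ and $bQ$; Remark~\ref{R:P=Z} handles the $P=Z$ subcase where $R$ itself is self FP-injective. I expect the main obstacle to be precisely this translation: showing that a counterexample element $x$ yields an element of $\widehat{R}$ with the prescribed breadth ideal, i.e.\ identifying $(U:x)$ with $\mathrm{B}(y)$ for a suitable $y\in\widehat{R}\setminus R$, and checking the side conditions $I\ne rZ$ and $I^\sharp=Z$ using Lemma~\ref{L:arch}(2) and Lemma~\ref{L:ann}.

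For the converse, assume $(2)$ and the hypothesis that each indecomposable injective $E$ with $E_\sharp=Z$ contains a pure uniserial submodule. Suppose $(1)$ fails: there is a proper ideal $I$ with $I^\sharp=Z$, $I\ne rZ$ for any $r\in R$, and $R/I$ not complete in its f.c.\ topology. By Proposition~\ref{P:breadth}(1) there is $x\in\widehat{R}\setminus R$ with $\mathrm{B}(x)=I$. The plan is to build from this data a concrete FP-injective module $E$ (an indecomposable injective with $E_\sharp=Z$) together with a pure uniserial submodule $U$ such that $E/U$ fails to be FP-injective, using Lemma~\ref{L:crucial}(2) to witness the failure: one wants an element $x\in E\setminus U$ and $a\in R$ with $(0:a)=(U:x)$, and Lemma~\ref{L:uniserial} together with the standing hypothesis provides enough pure uniserial (and hence pure polyserial) submodules inside an appropriate $E$ to realize $U$. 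Concretely, take $U$ a pure uniserial submodule of a chosen indecomposable injective $E$ with $E_\sharp = Z$, chosen so that the breadth-ideal data of $x$ sits inside $E/U$; the incompleteness of $R/I$ is exactly the obstruction to extending a certain map, so $E/U$ is not FP-injective. The delicate point on this side is choosing $E$ and the embedding of $U$ so that the arithmetic of $(U:x)$ reproduces $I$ — this is where the hypothesis on pure uniserial submodules and Lemma~\ref{L:uniserial} are used, and where Lemmas~\ref{L:ideal} and \ref{L:ann} again govern which ideals $I$ can actually occur as such annihilator-type ideals. Combining both directions gives the stated equivalence under the extra hypothesis.
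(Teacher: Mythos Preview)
Your overall strategy matches the paper's: reduce to $U$ uniserial and $E$ indecomposable injective, use Lemma~\ref{L:crucial} to reduce FP-injectivity of $E/U$ to showing no $(U:x)$ equals an annihilator $(0:a)$, and connect this to completeness via breadth ideals. But two genuine gaps remain.

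First, the reduction to ``$E_\sharp = Z$'' is wrong: Lemma~\ref{L:topbot} constrains $E^\sharp$, not $E_\sharp$, and $E_\sharp$ can properly contain $Z$ (e.g.\ $E=\mathrm{E}(R/P)$ when $Z\subsetneq P$). The paper localizes at $L=E_\sharp$ to obtain $E_\sharp = P$ (the maximal ideal), not $E_\sharp = Z$; this preserves $Q$ and condition~(1) since $Z\subseteq L$. Second, and more substantively, your ``translation'' step is mis-framed. You propose to identify $(U:x)$ with $\mathrm{B}(y)$ for some $y\in\widehat{R}$, but the correct picture works over $R'=R/A$ where $A=(0:x)$, which is Archimedean because $A^\sharp=E_\sharp=P$. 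The key reduction, absent from your plan, is that if $E'=\{y\in E: A\subseteq(0:y)\}$ and $e\in U$ satisfies $(0:e)=A$, then $U\cap E'=Re\cong R'$, whence $(U:x)=(Re:x)$. Proposition~\ref{P:breadth}(2) applied over $R'$ then gives $(U:x)=P(A:I)$ where $I/A=\mathrm{B}(x)$; so $(U:x)$ is not a breadth ideal but the annihilator (over $R'$) of one, twisted by $P$. Hypothesis~(1) forces $I^\sharp\ne Z$ or $I=rZ$; Lemma~\ref{L:arch}(2) and Lemma~\ref{L:ann}(1), together with the non-coherence of $Q$ (so $(0:a)$ is never principal over $Q$), then give $(0:a)\subsetneq(U:x)$. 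Your phrase ``localizing so that $U$ is essentially $R$-like'' gestures at this, but it is a quotient construction, not a localization, and the formula $(U:x)=P(A:I)$ is the crux. For $(2)\Rightarrow(1)$ you likewise omit the passage to $Q$ (reducing to $Z=P$, using that $R/I$ not complete implies $Q/I$ not complete) and the case split on $I$ principal versus not; in the non-principal case, setting $A=I(0:a)$ via Lemma~\ref{L:ideal} and taking $E=\mathrm{E}_R(R/A)$ is precisely how one manufactures $x$ with $(U:x)=(0:a)$, and this construction is missing from your plan.
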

\begin{proof} $(1)\Rightarrow (2)$.
We may assume that $U$ is uniserial by Proposition \ref{P:unipoly} and that $E$ is injective and indecomposable by Proposition \ref{P:redu}. Let $E_{\sharp}=L$. By Proposition \ref{P:main} we may suppose that $Z\subseteq L$. After, possibly replacing $R$ with $R_L$, we may assume that $L=P$. Let $x\in E\setminus U$ and $a\in R$ such that $(0:a)\subseteq (U:x)$. Let $A=(0:x)$ and $R'=R/A$. Let $E'=\{y\in E\mid A\subseteq (0:y)\}$. Let $c\in (U:x)\setminus A$. Since $U$ is a pure submodule there exists $e\in U$ such that $cx=ce$ and by \cite[Lemma 2]{Couch03} $(0:e)=A$. By \cite[Lemma 26]{Couch03} $A^{\sharp}=E_{\sharp}=P$. Thus $A$ is Archimedean. Let $v\in U\cap E'$ such that $e=tv$ for some $t\in R$. Then $A\subseteq (0:v)=t(0:e)=tA$. So, $tA=A$. We deduce that $t$ is invertible and $R'\cong Re=E'\cap U$. It follows that $(U:x)=(Re:x)$. We have B($x$)$=I/A$ where either $I^{\sharp}\ne Z$ or $I=rZ$ for some $r\in R$. We deduce that $(Re:x)=P(A:I)$ by Proposition \ref{P:breadth}. By Lemma \ref{L:arch} $(A:I)^{\sharp}=I^{\sharp}$. If $(A:I)$ is not principal then $(Re:x)=(A:I)$. If $(A:I)=Rr$ for some $r\in P$, then $(Re:x)=Pr$, and in this case $(Re:x)^{\sharp}=P=I^{\sharp}$. In the two cases $(Re:x)^{\sharp}=I^{\sharp}$. We deduce that $(U:x)^{\sharp}\ne Z$ if $I^{\sharp}\ne Z$. If $I=rZ$ for some $r\in R$, then $R\ne Q$ and $Z\ne P$ because $Q/rZ$ is complete and $R/rZ$ is not. By Lemma \ref{L:ann} $(A:I)=Qs$ for some $s\in R$. But, since $R_{\sharp}=Z$, $(0:a)^{\sharp}=Z$  by \cite[Lemma 26]{Couch03}, and by \cite[Theorem 10]{Couch03} $(0:a)$ is not finitely generated over $Q$. Hence $(0:a)\subset (U:x)$. By Lemma \ref{L:crucial} there exist $u\in U$ and $y\in E$ such that $x=ay+u$, so, $E/U$ is FP-injective.

$(2)\Rightarrow (1)$.
By way of contradiction suppose there exists an ideal $I$ of $R$, $I\ne rZ$ for any $r\in Z$, such that $I^{\sharp}=Z$ and $R/I$ is not complete in its f.c. topology. Since the natural map $R\rightarrow Q$ is a monomorphism, as in \cite[Proposition 4]{Cou10}, we can prove that $Q/I$ is not complete in its f.c. topology. After, possibly replacing $R$ by $Q$ we may assume that $Z=P$. Then, $R$ is not coherent and $I$ is Archimedean. Let $s\in P\setminus I$. So, $I\subset (I:s)\subset P$. If $E$ is the injective hull of $R$, by Proposition \ref{P:breadth} there exists $x\in E\setminus R$ such that $I=$B($x$). Since $s\notin I$, $x=r+sy$ with $r\in R$ and $y\in E\setminus R$. We have B($y$)=$(I:s)$, whence $R/(I:s)$ is not complete too. So, possibly, after replacing $I$ with $(I:s)$, we can choose $I\ne 0$.

First assume that $I=Ra$ for some $a\in P$. Let $E$ be the injective hull of $R$ and $x\in E\setminus R$ such that B($x$)$=I$. By Proposition \ref{P:breadth} $(R:x)=P(0:a)=(0:a)$ since $(0:a)$ is not finitely generated by \cite[Theorem 10]{Couch03}. By Lemma \ref{L:crucial} $E/R$ is not FP-injective.

Now, suppose that $I$ is not finitely generated. Let $a$ be a non-zero element of $I$. Then $(0:I)\subset (0:a)$. So, if $A=I(0:a)$, then $A\ne 0$ and $A$ is Archimedean by Lemma \ref{L:ideal}. Let $R'=R/A$, $e=1+A$, $E$ the injective hull of $R'$ over $R$ and $E'=\{z\in E\mid A\subseteq (0:z)\}$. Then $E'$ is the injective hull of $R'$ over $R'$. By hypothesis and Lemma \ref{L:uniserial} $R'$ is  contained in a pure uniserial submodule $U$ of $E$. As in the proof of $(1)\Rightarrow (2)$ we get $R'=E'\cap U$. Let $I'=I/A$ and $P'=P/A$. Since $R'/I'$ is not complete in its f.c. topology there exists $x\in E'$ such that B($x$)$=I'$. Then $(R':_{R'}x)=P'(0:_{R'}I')$. It is easy to see that $(R':_{R'}x)=(U:x)/A$ and $(0:_{R'}I')=(A:I)/A$. So, $(U:x)=P(A:I)$. From Lemma \ref{L:ideal} we deduce that $P(A:I)=(0:a)$. Hence $(U:x)=(0:a)$, whence $E/U$ is not FP-injective by Lemma \ref{L:crucial}.
\end{proof}

\begin{theorem}
\label{T:mainCoh} Let $R$ be a chain ring such that $Z\ne 0$ . Assume that $Q$ is coherent. The following conditions are equivalent:
\begin{enumerate}
\item $R/Z$ is complete in its f.c. topology;
\item for each FP-injective $R$-module $E$ and for each pure polyserial submodule $U$ of $E$, $E/U$ is FP-injective.
\end{enumerate} 
\end{theorem}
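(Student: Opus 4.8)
The plan is to rerun the proof of Theorem \ref{T:main}, altering it only at the two places where that proof used that $Q$ is \emph{not} coherent. One may assume $Z\subsetneq P$ at once: if $Z=P$, then $R=Q$ is coherent and $R/Z=R/P$ is a field, hence complete, so (1) holds trivially and (2) is exactly the characterization of coherent rings recalled in the introduction.

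For $(1)\Rightarrow(2)$, reduce by Propositions \ref{P:unipoly} and \ref{P:redu} to a pure uniserial $U$ inside an injective indecomposable $E$, and put $L=E_\sharp$. If $L\subsetneq Z$, Proposition \ref{P:main} finishes. If $L=Z$, then $E$ is a $Q$-module by Lemma \ref{L:topbot}, and since $U$ is pure in $E$ with $sE=E$ for $s\notin Z$ we get $U=U\cap sE=sU$, so $U$ is a $Q$-submodule that is pure and uniserial over $Q$; hence $E/U$ is FP-injective over the coherent ring $Q$, and therefore over $R$ since $Q$ is a localization of $R$. This leaves $L\supsetneq Z$, where one replaces $R$ by $R_L$ --- this leaves $Q$ unchanged, and leaves $R/Z$ complete because $R/Z$ is a valuation domain and $R_L/ZR_L$ a localization of it --- so that $E_\sharp=P$. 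Then copy Theorem \ref{T:main}: for $x\in E\setminus U$ and $a\in R$ with $(0:a)\subseteq(U:x)$, set $A=(0:x)$, Archimedean because $A^\sharp=E_\sharp=P$; identify $\{y\in E:Ay=0\}\cap U$ with $R/A$ (which has $Z=P$); obtain $(U:x)=P(A:I)$ with $I/A=\mathrm{B}(x)$ from Proposition \ref{P:breadth}(2), whence $(U:x)^\sharp=I^\sharp$ by Lemma \ref{L:arch}. If $I^\sharp\ne Z$, then $(U:x)^\sharp\ne Z=(0:a)^\sharp$, so $(0:a)\subsetneq(U:x)$ and $x\in U+aE$ by Lemma \ref{L:crucial}(1). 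If $I^\sharp=Z$, note that an ideal $J$ with $J^\sharp=Z$ has no non-zerodivisor (a non-zerodivisor $s\in J$ would give $(J:s)=R$, so $s\in J^\sharp=Z$), whence $sJ=J$ for all $s\notin Z$, so $J$ is an ideal of $Q$; also $R/I$ is not complete since $I/A=\mathrm{B}(x)$. The crux is then to exploit the coherence of $Q$ (via \cite[Theorem 10]{Couch03} and Lemmas \ref{L:ann}, \ref{L:ideal}) to force $I$ to be $Z$ itself, or some $rZ$ handled as in Theorem \ref{T:main}, contradicting the completeness of $R/Z$. Hence $E/U$ is FP-injective.

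For $(2)\Rightarrow(1)$, suppose $R/Z$ is not complete. First note that $E/U$ is automatically FP-injective whenever $E_\sharp\subseteq Z$ (by Proposition \ref{P:main} or, for $E_\sharp=Z$, by the reduction to $Q$ above), so any counterexample must have $E_\sharp=P$ for some indecomposable $E$, built over a suitable quotient of $R$. The plan, mirroring Theorem \ref{T:main}, is: pick a non-zero ideal $A\subseteq Z$ with $A^\sharp=P$, so $R':=R/A$ has its zerodivisors equal to its maximal ideal and $R'/(Z/A)\cong R/Z$ is not complete; by Proposition \ref{P:breadth}(1) there is $x\in\widehat{R'}$ with $\mathrm{B}(x)=Z/A$; embed $R'$ in its injective hull $E$; produce a pure uniserial $U\supseteq R'$ of $E$ with $\{y\in E:Ay=0\}\cap U$ equal to the copy of $R'$ (this is where coherence of $Q$ must be used to supply such a $U$, replacing the extra hypothesis available in Theorem \ref{T:main}); and compute, via Proposition \ref{P:breadth}(2) in $R'$ and the argument of Lemma \ref{L:ideal}, that $(U:x)=P(A:Z)=(0:a)$ for a suitable $0\ne a\in Z$. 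As $x\in\{y\in E:Ay=0\}\setminus U$, Lemma \ref{L:crucial}(2) gives that $E/U$ is not FP-injective.

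The hardest step is the case $I^\sharp=Z$ in $(1)\Rightarrow(2)$: in Theorem \ref{T:main} non-coherence of $Q$ made $(0:a)$ not finitely generated over $Q$, hence strictly smaller than $(U:x)$, disposing of the case for free, whereas here $(0:a)$ may be principal over $Q$, so one must instead squeeze a contradiction from coherence of $Q$ --- showing that a breadth ideal $I$ with $I^\sharp=Z$ cannot sit anywhere but at $Z$ (or at some $rZ$) --- and then use completeness of $R/Z$. Two supporting points need care: that completeness of $R/Z$ in its f.c.\ topology passes through the localization $R\rightsquigarrow R_L$ (which should follow from $R/Z$ being a valuation domain), and, on the converse side, the construction of the pure uniserial submodule without Theorem \ref{T:main}'s standing extra hypothesis --- which is precisely where the coherence of $Q$ is indispensable, since, via the first part, it keeps the bottom prime of the offending injectives strictly above $Z$.
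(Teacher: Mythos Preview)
Your overall skeleton matches the paper's, but the crux---the case $I^\sharp=Z$ in $(1)\Rightarrow(2)$---goes in the wrong direction. You propose to use coherence of $Q$ to force $I$ to be of the form $rZ$, then contradict completeness of $R/Z$. Neither step works: coherence of $Q$ says only that annihilators $(0:a)$ are principal over $Q$, not that every ideal with top prime $Z$ has the form $rZ$; and even if $I=rZ$, completeness of $R/Z$ does not obviously transfer to $R/rZ$ for non-unit $r$. The paper does the opposite. Since $I/A$ is a breadth ideal, $R/I$ is \emph{not} complete; the hypothesis that $R/Z$ is complete is used to exclude $I=rZ$. With $I\ne rZ$ in hand, Lemma~\ref{L:ann}(2) applies and gives $(A:I)\ne bQ$ for every $b\in Z$; since $(A:I)^\sharp=Z\ne P$ one has $(U:x)=P(A:I)=(A:I)$, while $(0:a)=bQ$ by coherence of $Q$, so $(0:a)\subsetneq(U:x)$ and Lemma~\ref{L:crucial}(1) finishes. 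In short, coherence enters only to describe $(0:a)$ as a principal $Q$-ideal; the dichotomy is driven by Lemma~\ref{L:ann}, whose part (2) is exactly designed for ideals that are \emph{not} of the form $rZ$.

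For $(2)\Rightarrow(1)$ your outline is in the right spirit but too vague at the two decisive points. The paper makes concrete choices: it fixes $0\ne a\in Z$, writes $(0:a)=bQ$ by coherence, takes $A=bR$, and first proves the explicit identity $(bR:Z)=(bZ:Z)=bQ=(0:a)$ using Remark~\ref{R:P=Z} over $Q$. For the existence of the pure uniserial $U$ containing $e=1+bR$ inside $E=\mathrm{E}(R/bR)$ it invokes \cite[Corollary~22(3)]{Couch03}; this is the coherence-powered replacement for the standing extra hypothesis in Theorem~\ref{T:main}, and it is the step you flag but do not carry out. With these in place, one produces $x$ with $\mathrm{B}(x)=Z/bR$ and reads off $(U:x)=(bR:Z)=(0:a)$, so Lemma~\ref{L:crucial}(2) yields the contradiction. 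Your abstract choice of ``some $A$ with $A^\sharp=P$'' would still need an analogue of the identity $(A:Z)=(0:a)$, which is exactly what the explicit $A=bR$ makes transparent.
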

\begin{proof}
$(1)\Rightarrow (2)$. We may assume that $Z\subset P$. Since $Q$ is coherent, for each $0\ne a\in Z$, $(0:a)=bQ$ for some $0\ne b\in Z$. Let $E$ be an injective module, $U$ a pure uniserial submodule of $E$ and $L=E_{\sharp}$. We may assume that $E$ is indecomposable. If $L\subseteq Z$ then $E/U$ is FP-injective because $R_L$ is coherent. Now assume that $Z\subset L$. As in the proof of Theorem \ref{T:main} we may suppose that $L=P$. Let $x\in E\setminus U$, $A=(0:x)$, $a\in R$ such that $(0:a)\subseteq (U:x)$ and $c\in (U:x)\setminus A$. As in the proof of Theorem \ref{T:main} we show there exists an ideal $I$ such that $(U:x)=P(A:I)$. If $I^{\sharp}\ne Z$ we do  in the proof of Theorem \ref{T:main} to show that $(0:a)\subset (U:x)$. Now suppose that $I^{\sharp}=Z$. By hypothesis $I\ne rZ$ for each $r\in R$. Since $(A:I)^{\sharp}=Z\ne P$, $(U:x)=(A:I)\ne (0:a)$ by Lemma \ref{L:ann}. We conclude by Lemma \ref{L:crucial} that $E/U$ is FP-injective.

$(2)\Rightarrow (1)$. Let $0\ne a\in Z$. Then $(0:a)=bQ$ for some $0\ne b\in Z$. It is obvious that $(bZ:Z)\subseteq (bR:Z)$. Let $c\in (bR:Z)$ then $cZ\subset bR$. Since $(bQ/bZ)$ is simple over $Q$ and $cZ$ is a proper $Q$-submodule of $bQ$ we get that $cZ\subseteq bZ$. Hence $(Rb:Z)=(bZ:Z)$. Since $bZ$ is an Archimedean ideal over $Q$ and that $(bZ:b)=Z$ then $(bZ:Z)=(bZ:(bZ:b))=bQ=(0:a)$ by Remark \ref{R:P=Z}. So, $(bR:Z)=(0:a)$. Now, assume that $R/Z$ is not complete in its f.c. topology. Let $E$ the injective hull of $R/bR$. By \cite[Corollary 22(3)]{Couch03} there exists a pure uniserial submodule $U$ of $E$ containing $e=1+bR$. Now, as in the proof of Theorem \ref{T:main} we show that there exists $x\in E\setminus U$ such that $(U:x)=(bR:Z)=(0:a)$. By Lemma \ref{L:crucial} $E/U$ is not FP-injective. This contradicts the hypothesis.
\end{proof}

\begin{corollary}
\label{C:Znonidem} Let $R$ be a chain ring such that $Z^2\ne Z$ . The following conditions are equivalent:
\begin{enumerate}
\item $R/Z$ is complete in its f.c. topology;
\item for each FP-injective $R$-module $E$ and for each pure polyserial submodule $U$ of $E$, $E/U$ is FP-injective.
\end{enumerate} 
\end{corollary}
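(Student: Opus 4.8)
The plan is to deduce the corollary from Theorems \ref{T:main} and \ref{T:mainCoh} by a case analysis on whether $Q$ is coherent. First note that $Z^2\ne Z$ forces $Z\ne 0$ (since $Z=0$ would give $Z^2=0=Z$), so the standing hypothesis ``$Z\ne 0$'' of Theorem \ref{T:mainCoh} is automatically available. If $Q$ is coherent, then the asserted equivalence $(1)\Leftrightarrow(2)$ is \emph{exactly} Theorem \ref{T:mainCoh}, and there is nothing more to do; here $Z^2\ne Z$ is used only through $Z\ne 0$.

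So assume $Q$ is not coherent; now Theorem \ref{T:main} applies, and I would use it to obtain the equivalence. Two gaps must be filled. \emph{First}, I must show that the family-of-ideals condition of Theorem \ref{T:main}(1) — completeness of $R/I$ for every proper ideal $I\ne rZ$ with $I^{\sharp}=Z$ — is, under $Z^2\ne Z$, equivalent to the single condition ``$R/Z$ is complete in its f.c. topology''. For this I would pass to $Q$ as in the proof of Theorem \ref{T:main} (the map $R\to Q$ being injective, non-completeness is inherited), reducing to the case $Z=P$. Then I would use that $Z^2\ne Z$ collapses the relevant class of ideals: since $P^2\ne P$ in a chain ring forces $P$ to be principal, the ideals $I$ with $I^{\sharp}=P$ and $I\ne rP$ become severely restricted, and one checks — via Proposition \ref{P:breadth} and Lemma \ref{L:arch} — that each such $I$ either makes $R/I$ automatically complete (e.g.\ when some power of the generator of $P$ lies in $I$, making the f.c.\ topology discrete on $R/I$) or is controlled, through the colon operations of Proposition \ref{P:breadth}, by $Z$ itself, so that $R/I$ is complete precisely when $R/Z$ is. \emph{Second}, I must verify the hypothesis of the converse half of Theorem \ref{T:main}: that every indecomposable injective $E$ with $E_{\sharp}=Z$ contains a pure uniserial submodule. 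Working over $Q$, where $E_{\sharp}$ is the maximal ideal, $Z^2\ne Z$ lets one pick $0\ne a\in Z$ with $(0:a)$ not of the exceptional form attached to an idempotent ideal, so that the construction of \cite[Corollary 22]{Couch03} — already invoked in the proof of Theorem \ref{T:mainCoh} — yields a pure uniserial submodule through a prescribed nonzero element of $E$, and Lemma \ref{L:uniserial} then carries it to every element. Feeding both facts into Theorem \ref{T:main} gives $(1)\Leftrightarrow(2)$ in the non-coherent case.

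The main obstacle is the first gap of the non-coherent case: Theorem \ref{T:main}(1) quantifies over a whole class of ideals while the corollary speaks only about $R/Z$, so the real content is to prove that $Z^2\ne Z$ collapses that class — either it contains no ideal outside $\{\,rZ\mid r\in R\,\}$, or each of its members $I$ satisfies ``$R/I$ complete $\Leftrightarrow$ $R/Z$ complete''. (That the hypothesis is genuinely needed is shown by the $D/dM$-type example of the preceding section, where $Z$ is idempotent and ``deep'' Archimedean ideals incommensurable with $Z$ do occur.) Carrying out this collapse, using Proposition \ref{P:breadth}, Lemma \ref{L:arch}, and the ideal-theoretic results of \cite{Couch03}, is the one genuinely new step; once it is in hand the corollary follows immediately from Theorems \ref{T:main} and \ref{T:mainCoh}.
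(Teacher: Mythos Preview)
You have missed the one observation that makes the corollary immediate: the hypothesis $Z^2\ne Z$ already forces $Q$ to be coherent. Indeed, $Z^2\ne Z$ means the maximal ideal $ZQ$ of $Q$ is not idempotent, hence is principal over $Q$; and by \cite[Theorem 10]{Couch03} a chain ring whose maximal ideal is principal is coherent. So your case analysis is superfluous: the ``$Q$ not coherent'' case is vacuous, and the corollary is literally Theorem~\ref{T:mainCoh} (the standing hypothesis $Z\ne 0$ being, as you note, a consequence of $Z^2\ne Z$). This is exactly the paper's one-line proof.

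In fact your own argument in the non-coherent case stumbles onto this: you write ``$P^2\ne P$ in a chain ring forces $P$ to be principal'' after passing to $Q$. But a chain ring with principal maximal ideal is coherent, so at that point you have contradicted your case assumption rather than begun to verify Theorem~\ref{T:main}(1). The elaborate program you sketch---collapsing the class of ideals $I$ with $I^{\sharp}=Z$, and producing pure uniserial submodules via \cite[Corollary 22]{Couch03}---is therefore work done in an empty case. The role of $Z^2\ne Z$ is not merely to supply $Z\ne 0$ in the coherent branch; it is precisely what guarantees that the coherent branch is the only branch.
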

\begin{proof} Since $Z^2\ne Z$, $Z$ is principal over $Q$ and $Q$ is coherent by \cite[Theorem 10]{Couch03}.
\end{proof}

A chain ring $R$ is said to be {\bf strongly discrete} if $L\ne L^2$ for each non-zero prime ideal of $R$.

\begin{corollary}
\label{C:discr} Let $R$ be a strongly discrete chain ring. The following conditions are equivalent:
\begin{enumerate}
\item $R/Z$ is complete in its f.c. topology;
\item for each FP-injective $R$-module $E$ and for each polyserial submodule $U$ of $E$, $E/U$ is FP-injective.
\end{enumerate}
\end{corollary}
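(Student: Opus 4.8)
The plan is to reduce the statement to Corollary \ref{C:Znonidem}. Since $R$ is strongly discrete, either $Z=0$ or $Z\neq Z^{2}$. If $Z=0$, then $R$ is a valuation domain, hence a Pr\"ufer ring, so by \cite[Proposition IX.3.4]{FuSa01} FP-injective modules coincide with divisible modules; as a factor of a divisible module is again divisible, condition $(2)$ is automatic, and the equivalence with $(1)$ in this case reduces to the standard description of completeness of valuation domains in their $R$-topology. If $Z\neq Z^{2}$, then $Z$ is a non-zero prime, $Z$ is principal over $Q$, and $Q$ is coherent by \cite[Theorem 10]{Couch03}; thus Corollary \ref{C:Znonidem} applies and tells us that $(1)$ is equivalent to the assertion $(2')$ that $E/U$ is FP-injective for every FP-injective $E$ and every \emph{pure} polyserial submodule $U$ of $E$. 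Since $(2)\Rightarrow(2')$ is trivial, what remains is to prove $(1)\Rightarrow(2)$, that is, to remove the purity hypothesis from $(2')$.

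For that removal I would argue as follows. By Proposition \ref{P:unipoly} it suffices to treat the case where $U$ is uniserial, and by Proposition \ref{P:redu} we may take $E$ to be the injective hull of $U$, which is indecomposable since $U$ is uniform. Now run the localisation reduction used in the proofs of Theorems \ref{T:main} and \ref{T:mainCoh}: replacing $R$ by $R_{L}$ with $L=E_{\sharp}$, and using Proposition \ref{P:main} together with \cite[Lemma 26]{Couch03}, we may assume $Z\subseteq L=P$ and that the annihilator of every non-zero element of $E$ is Archimedean. Pick $0\neq u_{0}\in U$; then $Ru_{0}\cong R/(0:u_{0})$ with $(0:u_{0})$ Archimedean, so $Ru_{0}$ is self FP-injective by Remark \ref{R:P=Z}, and being an FP-injective submodule of the injective module $E$ it is pure in $E$. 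Lemma \ref{L:uniserial} then yields, for each non-zero $e\in E$, a pure uniserial submodule of $E$ containing $e$; the idea is to assemble these (together with $U$) into a single pure uniserial submodule $V$ of $E$ with $U\subseteq V$.

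Granting such a $V$, consider the exact sequence $0\to V/U\to E/U\to E/V\to 0$. Here $E/V$ is FP-injective by $(2')$, so the long exact $\mathrm{Ext}$ sequence reduces the FP-injectivity of $E/U$ to that of $V/U$. Thus the whole argument comes down to two points: constructing the pure uniserial $V\supseteq U$ inside $E$, and showing that the ``tail'' $V/U$ of this pure uniserial extension of a uniserial module is FP-injective. The latter is exactly where strong discreteness of $R$ must be invoked, and I expect these two points to be the main obstacle; everything else is a routine assembly of Corollary \ref{C:Znonidem}, Propositions \ref{P:redu} and \ref{P:unipoly}, Lemma \ref{L:uniserial}, and the standard localisation reduction.
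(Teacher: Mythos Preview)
The paper offers no proof of this corollary; it is placed immediately after Corollary~\ref{C:Znonidem} and is evidently meant to follow from it in one line: if $R$ is strongly discrete and $Z\ne 0$ then $Z$ is a non-zero prime with $Z^{2}\ne Z$, so Corollary~\ref{C:Znonidem} applies verbatim. The word ``pure'' is missing from condition~(2) compared with every neighbouring result (Theorems~\ref{T:main}, \ref{T:mainCoh}, Corollaries~\ref{C:Znonidem}, \ref{C:Z=P}); this is almost certainly a slip, and the hypothesis $Z\ne 0$ (present in Theorem~\ref{T:mainCoh} and forced in Corollary~\ref{C:Znonidem} by $Z^{2}\ne Z$) is tacitly carried over.

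Your proposal takes the missing ``pure'' at face value and spends almost all its effort trying to remove the purity hypothesis. That programme is not what the paper does, and you do not complete it either: you correctly isolate the two genuine obstacles---producing a pure uniserial $V$ with $U\subseteq V\subseteq E$, and showing that $V/U$ is FP-injective---but you settle neither, and it is not clear that strong discreteness alone suffices for the second. Your treatment of the case $Z=0$ is also a red flag rather than a proof: over a valuation domain every factor of a divisible module is divisible, so condition~(2) (with or without purity) holds automatically, yet $R/Z=R$ need not be complete in its f.c.\ topology (take $R=\mathbb{Z}_{(p)}$). Thus the equivalence, read literally, is false when $Z=0$; this confirms that $Z\ne 0$ is an implicit standing assumption here, and with it the corollary really is nothing more than Corollary~\ref{C:Znonidem} specialised to strongly discrete rings.
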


\begin{corollary}
\label{C:Z=P} Let $R$ be a chain ring such that $Z=P$. Consider the following conditions:
\begin{enumerate}
\item either $R$ is coherent or $R/I$ is complete in its f.c. topology for each Archime\-dean ideal $I$;
\item for each FP-injective $R$-module $E$ and for each pure polyserial submodule $U$ of $E$, $E/U$ is FP-injective.
\end{enumerate}
Then $(1)\Rightarrow (2)$ and the converse holds if each indecomposable injective module $E$ for which $E_{\sharp}=P$ contains a pure uniserial submodule.
\end{corollary}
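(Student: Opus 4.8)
The plan is to deduce Corollary~\ref{C:Z=P} from Theorem~\ref{T:main}. First I would dispose of the case in which $R$ is coherent: then the first alternative of condition~(1) holds, and by \cite[Theorem 3.2]{Ste70} (recalled in the introduction) every factor of an FP-injective module modulo a pure submodule is FP-injective, so~(2) also holds, a fortiori for pure polyserial submodules. Thus when $R$ is coherent both conditions hold and both implications are trivial; from now on I assume $R$ is not coherent.

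Since $Z=P$, the quotient ring $Q=R_{Z}$ equals $R$, so $Q$ is not coherent and Theorem~\ref{T:main} is applicable. The next step is to match up the two formulations. Because $Z=P$, a proper ideal $I$ satisfies $I^{\sharp}=Z$ exactly when it is Archimedean, and the auxiliary hypothesis of Theorem~\ref{T:main}, namely that each indecomposable injective module $E$ with $E_{\sharp}=Z$ contains a pure uniserial submodule, is verbatim the auxiliary hypothesis of the corollary. The two statements of condition~(1) differ only in that Theorem~\ref{T:main} omits the ideals of the form $rZ=rP$. I would close this gap by recalling, as is used in the proof of Theorem~\ref{T:main}, that $Q/rZ$ is complete in its f.c. topology for every $r\in R$; since $Q=R$ this means $R/rP$ is complete in its f.c. topology for every $r\in R$. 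Hence, for a non-coherent chain ring with $Z=P$, the second alternative of~(1) --- that $R/I$ is complete in its f.c. topology for each Archimedean ideal $I$ --- is equivalent to condition~(1) of Theorem~\ref{T:main}.

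With these identifications in hand, the implication $(1)\Rightarrow(2)$ of the corollary is precisely the implication $(1)\Rightarrow(2)$ of Theorem~\ref{T:main}, and, under the auxiliary hypothesis, the implication $(2)\Rightarrow(1)$ of the corollary is the implication $(2)\Rightarrow(1)$ of Theorem~\ref{T:main} followed by the observation above that admitting the further ideals $rP$ costs nothing. The one point that genuinely requires care is the completeness of $R/rP$ (equivalently $Q/rZ$) in its f.c. topology, which is exactly what makes the family of ideals in the corollary coincide with the more restrictive family in Theorem~\ref{T:main}; everything else is the bookkeeping consequence of $Q=R$ and of the equivalence between ``$I^{\sharp}=Z$'' and ``$I$ is Archimedean'' that holds because $Z=P$.
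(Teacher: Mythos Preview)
Your approach matches the paper's: the paper's proof is the single line ``It is a consequence of Theorem~\ref{T:main}'', and you have correctly unpacked why---when $Z=P$ one has $Q=R$, ``Archimedean'' coincides with ``$I^{\sharp}=Z$'', and the coherent case is handled separately (a point the paper leaves implicit, since Theorem~\ref{T:main} assumes $Q$ not coherent). One small caution: your appeal to ``$Q/rZ$ is complete in its f.c.\ topology for every $r\in R$'' slightly overreaches what is actually used in the proof of Theorem~\ref{T:main}; there the fact is invoked only when $rZ\ne 0$ (in which case $R/rP$ has nonzero socle, is finitely cogenerated, and hence discrete and complete), whereas for $r=0$ the claim amounts to $R$ itself being complete, which is not automatic when $(0:P)=0$. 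This edge case matters only for the converse at $I=0$ and is not addressed in the paper's one-line proof either, so your argument is as complete as the paper's.
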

\begin{proof}
It is a consequence of Theorem \ref{T:main}.
\end{proof}

For each module $M$ we denote by $\mathcal{A}(M)$ its set of annihilator ideals, i.e. an ideal $A$ belongs to $\mathcal{A}(M)$ if there exists $0\ne x\in M$ such that $A=(0:x)$. If $E$ is a uniform injective module over a chain ring $R$, then, for any $A,\ B\in\mathcal{A}(E),\ A\subset B$ there exists $r\in R$ such that $A=rB$ and $B=(A:r)$ (see \cite{Nis72}).

\begin{lemma} \label{L:puruni} Let $R$ be a chain ring. Assume that $Z_1\ne Z$, where $Z_1$ is the union of all prime ideals properly contained in $Z$. Let $E$ be an indecomposable injective $R$-module and $0\ne e\in E$. Suppose that $E_{\sharp}=Z$. Then $E$ contains a uniserial pure submodule $U$ such that $e\in U$. 
\end{lemma}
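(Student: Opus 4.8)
The plan is as follows. By Lemma~\ref{L:uniserial} it suffices to produce a \emph{single} pure uniserial submodule of $E$: feeding it and the given $e$ into that lemma yields a pure submodule $V\ni e$ which, the map built in its proof being injective, is again uniserial. First I localize. By Lemma~\ref{L:topbot}, $E$ is a module over $Q=R_Z$; since a submodule $V$ of the $Q$-module $E$ is $R$-pure iff it is $Q$-pure (both amount to $rV=rE\cap V$ for all $r\in R$), and since a cyclic $Q$-module $Q/A$ is uniserial over $R$ because $Q$ itself is uniserial over $R$, I may replace $R$ by $Q$ and assume $Z=P$. Then $R$ is self FP-injective by Remark~\ref{R:P=Z}, the hypothesis $Z_1\ne Z$ reads $P_1\ne P$ with $P_1$ the union of the non-maximal primes, and $E_\sharp=P$.

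Now I distinguish two cases. If $E\cong E(R)$, then $R$ is FP-injective over itself, hence a pure submodule of the injective module $E(R)\cong E$ (a submodule of an injective module is pure iff it is FP-injective), and $R$ is uniserial; this case is settled. Otherwise no $(0:x)$ with $0\ne x\in E$ equals $(0:b)$ for some $b\in R$, since such an equality would make $Rx\cong bR$ essential in $R$, forcing $E=E(Rx)\cong E(bR)=E(R)$. As $E_\sharp=\bigcup_{0\ne x\in E}(0:x)=P\supsetneq P_1$, choose $0\ne x_0\in E$ and $s\in P\setminus P_1$ with $sx_0=0$, and set $A=(0:x_0)$: it is a nonzero proper ideal with $\sqrt A=P$ (as $s\in A$ lies in no non-maximal prime), so $A$ is Archimedean, and $E=E(R/A)$ because $Rx_0$ is essential in the uniform module $E$. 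The goal is then to show that $R/A$ is FP-injective over $R$; granting this, $R/A$ is a pure submodule of the injective module $E(R/A)=E$ and is uniserial (cyclic over a chain ring), which is exactly what we need.

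Since a finitely presented module over a chain ring is a direct sum of cyclically presented modules, FP-injectivity of $R/A$ amounts to the identity $(A:(0:b))=bR+A$ for every $b\in R$; this is trivial for $b$ a unit or $b=0$. For $b$ a nonzero non-unit, put $J=(0:b)$; self FP-injectivity gives $(0:J)=bR$, so the identity becomes $(A:J)=(0:J)+A$, where only $(A:J)\subseteq bR+A$ needs proof. Here I would exploit that $A$ is Archimedean — so $(A:(A:I))=I$ for every ideal $I\supseteq A$ not of the form $Pt$, and $(A:I)^\sharp=I^\sharp$ (Remark~\ref{R:P=Z}, Lemma~\ref{L:arch}(2)) — that $A$ is not a ring-element annihilator, which forces $(0:b)\not\subseteq A$ and hence $(A:J)\subsetneq R$, and the facts from \cite{Couch03} that $(0:b)$ is not finitely generated and that $r(0:rc)=rR\cap(0:c)$; a short case analysis on how $Rb$, $J$ and $A$ sit in the chain of ideals then yields the identity. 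The step I expect to be the real obstacle is precisely this ideal-theoretic computation, and within it the ideals of the form $Pt$, where the ``double-annihilator'' formulas of Remark~\ref{R:P=Z} break down: one either treats these separately or first arranges that $A\ne Pt$ by replacing $x_0$ with $tx_0$ for a suitable $t$, using that $P_1\ne P$ makes Archimedean annihilator ideals plentiful inside $\mathcal{A}(E)$. With the claim in hand, Lemma~\ref{L:uniserial} applied to $R/A$ (or to $R$ in the first case) and to the given $e$ completes the proof.
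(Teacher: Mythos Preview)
Your reduction matches the paper's: localize to $R=Q$ so $Z=P$, invoke Lemma~\ref{L:uniserial} to reduce to exhibiting \emph{one} pure uniserial submodule, and dispose of $E\cong E(R)$ using self FP-injectivity. After that the two arguments diverge sharply, and your remaining plan has a genuine gap.

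You propose to show that the \emph{cyclic} module $R/A$ is FP-injective for a suitably chosen $A\in\mathcal A(E)$, and you acknowledge that the needed identity $(A:(0:b))=bR+A$ is the ``real obstacle'' you have not carried out. Two points indicate this is not just unfinished but likely unworkable. First, your only use of the hypothesis $Z_1\ne Z$ is to force $A$ to be Archimedean via $\sqrt{A}=P$; but $A^\sharp=E_\sharp=P$ already holds for every $A\in\mathcal A(E)$ by \cite[Lemma~26]{Couch03}, so your argument, if valid, would prove the lemma with no hypothesis on $Z_1$ at all. The paper, by contrast, uses $Z_1\ne Z$ essentially: it makes $R/Z_1$ Archimedean, hence $P$ countably generated (\cite[Lemma~33]{Couch03}), which feeds into \cite[Propositions~19 and~32]{Couch03}. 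Second, the paper does \emph{not} claim that any cyclic $R/A$ is pure in $E$; instead it invokes the Shores--Lewis construction \cite[Theorem~5.5]{ShLe74} to produce a uniserial $U$ with $\mathcal A(U)=\mathcal A(E)$, and the FP-injectivity proof for $U$ hinges on finding, for each $u\in U$, an element $v\in U$ with $(0:v)\subset r(0:u)$ --- i.e.\ elements with strictly smaller annihilators, which a cyclic module does not provide. The cases $E\cong E(R/P)$ and $E\cong E(R/rR)$ are handled separately via \cite[Proposition~19]{Couch03}, again through the countability coming from $Z_1\ne Z$, not by showing $R/P$ or $R/rR$ is itself FP-injective.

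So the missing idea is precisely the non-cyclic uniserial $U$ from Shores--Lewis together with the countability input; your case analysis on $bR$, $(0:b)$ and $A$ is not a substitute for it.
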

\begin{proof} Since $E$ is a module over $Q$, we may assume that $R=Q$. By Lemma \ref{L:uniserial} it is enough to show that $E$ contains a pure uniserial submodule. Since $R/Z_1$ is archimedean, $P$ is countably generated by \cite[Lemma 33]{Couch03}. By \cite[Proposition 32]{Couch03} $(0:P)$ is a countable intersection of ideals containing it properly. So, by  \cite[Proposition 19]{Couch03} $\mathrm{E}(R/P)$ and $\mathrm{E}(R/rR)$, $r\ne 0$, contain a pure uniserial submodule. If $\mathcal{A}(E)=\mathcal{A}(R)$ then $E\cong\mathrm{E}(R)$. Since $R$ is self FP-injective, it follows that $E$ contains a pure uniserial submodule. Now assume that $\mathcal{A}(E)\ne\mathcal{A}(R)$ and $\mathcal{A}(E)\ne \{rR\mid r\in R\}$. By \cite[Theorem 5.5]{ShLe74} there exists a uniserial $R$-module $U$ such that $\mathcal{A}(U)=\mathcal{A}(E)$ and consequently $E\cong\mathrm{E}(U)$. Let $r\in R$ and $u\in U$ such that $(0:r)\subseteq (0:u)$. Then $(0:r)\subset (0:u)$, and $r(0:u)$  is not a principal ideal. So, $(0:P)\subset r(0:u)$, and by \cite[Proposition 27]{Couch03} there exists $v\in U$ such that $(0:v)\subset r(0:u)\subset (0:u)$. It follows that $u=tv$ for some $t\in R$. By \cite[Lemma 2]{Couch03} $(0:v)=t(0:u)\subset r(0:u)$. Hence $t\in rR$ and $u\in rU$. We conclude that $U$ is FP-injective, whence it is isomorphic to a pure submodule of $E$.
\end{proof}

In the following theorems let us observe that the word "polyserial" is replaced with "of finite Goldie dimension".

\begin{theorem}
\label{T:Z=P1} Let $R$ be a chain ring such that $Z=P$. Assume that $P\ne P_1$ where $P_1$ is the union of all nonmaximal prime ideals of $R$. The following conditions are equivalent:
\begin{enumerate}
\item either $R$ is coherent or $R/P_1$ is almost maximal;
\item for each FP-injective $R$-module $E$ and for any its pure submodule $U$ of finite Goldie dimension, $E/U$ is FP-injective.
\end{enumerate}
\end{theorem}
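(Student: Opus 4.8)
The plan is to run the arguments behind Theorem~\ref{T:main} and Corollary~\ref{C:Z=P}, upgrading them from the uniserial/polyserial setting to modules of finite Goldie dimension; the hypothesis $P\ne P_1$ enters only through Lemma~\ref{L:puruni}: since $Z=P$ makes $P_1$ coincide with the ideal $Z_1$ of that lemma, it applies and guarantees that each indecomposable injective $R$-module $E$ with $E_{\sharp}=P$ contains a pure uniserial submodule. One should first note that condition~(1) is the specialisation of condition~(1) of Corollary~\ref{C:Z=P} to the present hypotheses: $R/P_1$ is an Archimedean chain ring (its only nonzero prime is $P/P_1$), and, using that over a chain ring a cyclic module $R/A$ is linearly compact exactly when it is complete in its f.c.\ topology, one checks that ``$R$ coherent or $R/P_1$ almost maximal'' is equivalent to ``$R$ coherent or $R/I$ complete in its f.c.\ topology for every Archimedean ideal $I$'', the Archimedean ideals $I\ne rZ$ being those strictly above $P_1$ (controlled by almost-maximality of $R/P_1$) and those at or below $P_1$, whose quotients are $R$ itself or finitely cogenerated.

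Granting this, for $(1)\Rightarrow(2)$ reduce by Proposition~\ref{P:redu} to $E=\mathrm{E}(U)=E_1\oplus\dots\oplus E_n$ with each $E_i$ indecomposable injective and $U$ pure and essential in $E$. If every $(E_i)_{\sharp}\subsetneq P$ then $E$ and $U$ are modules over the localisation of $R$ at $\bigcup_i(E_i)_{\sharp}$, which is coherent by \cite[Theorem~11]{Couch03}, and $E/U$ is FP-injective there, hence over $R$. So one may assume $(E_i)_{\sharp}=P$ for some $i$. By Lemma~\ref{L:crucial}, $E/U$ fails to be FP-injective precisely when there is an $x\in E\setminus U$ with $(U:x)$ a principal annihilator $(0:a)$; fix such an $x$. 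By Lemma~\ref{L:puruni} and Lemma~\ref{L:uniserial}, $x$ lies in a pure uniserial submodule $V$ of $E$, and then $(U:x)=(U\cap V:x)$, an equality that can be read inside the uniserial module $V$. One is thus reduced to the situation of the proof of Theorem~\ref{T:main}: put $A=(0:x)$, express $(U\cap V:x)$ as $P(A:I)$ for a breadth ideal $I$ by Proposition~\ref{P:breadth}, and run through the cases $I^{\sharp}\ne Z$, $I=rZ$, $I^{\sharp}=Z$ as there, invoking the completeness supplied by ``$R/P_1$ almost maximal'' (and by completeness of $R$) to exclude $(U\cap V:x)$ being of the form $(0:a)$.

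For $(2)\Rightarrow(1)$ one argues by contraposition, as in the second half of the proof of Theorem~\ref{T:main}: if $R$ is not coherent and a required quotient of $R$ is not complete, one constructs an indecomposable injective $E$, a pure uniserial (hence finite-Goldie-dimensional) submodule $U$, and an $x\in E\setminus U$ with $(U:x)=(0:a)$, so $E/U$ is not FP-injective by Lemma~\ref{L:crucial}. The main obstacle is the reduction in $(1)\Rightarrow(2)$ when $U$ has Goldie dimension $>1$: one must make the direct-sum decomposition of $\mathrm{E}(U)$ interact with $U$ so that the computation of $(U:x)$ localises to a single uniserial module. This is exactly where the hypothesis $P\ne P_1$ is indispensable --- it provides, via Lemma~\ref{L:puruni}, the pure uniserial submodules that make this localisation possible --- and without it the statement genuinely fails, since over an arbitrary chain ring a pure submodule of finite Goldie dimension of an FP-injective module need not even be polyserial.
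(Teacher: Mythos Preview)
Your $(2)\Rightarrow(1)$ is fine and matches the paper: Lemma~\ref{L:puruni} supplies the pure uniserial submodules required for the converse in Corollary~\ref{C:Z=P}, and from there the contrapositive construction of Theorem~\ref{T:main} applies verbatim.

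The $(1)\Rightarrow(2)$ direction, however, has a real gap at the step where you write ``By Lemma~\ref{L:puruni} and Lemma~\ref{L:uniserial}, $x$ lies in a pure uniserial submodule $V$ of $E$''. Both lemmas are stated only for \emph{indecomposable} injective modules $E$ with $E_\sharp=Z$, whereas your $E=\mathrm{E}(U)$ is a finite direct sum. To apply them you would first need to place $x$ inside an indecomposable injective submodule $E'\subseteq E$ with $(E')_\sharp=P$; but $(E')_\sharp=(0:x)^\sharp$, and nothing prevents $(0:x)^\sharp\subsetneq P$ (for instance if the component of $x$ in the summand $G$ with $G_\sharp\subset P$ has smaller annihilator than its $F$-component). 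The paper handles exactly this obstruction by an explicit replacement $x\mapsto x+u$ with $u\in U$ chosen so that $(0:x+u)^\sharp=P$, using the existence of elements in $U$ with small Archimedean annihilator coming from the summand $F$. You omit this entirely. A second, related gap: even granting $x\in V$ with $V$ pure uniserial, the equality $(U:x)=(U\cap V:x)$ is correct, but the claim that you are ``reduced to the situation of the proof of Theorem~\ref{T:main}'' is not: that proof needs the uniserial submodule to be \emph{pure} (to get $e\in U$ with $ce=cx$ and $(0:e)=(0:x)$, and then to identify $E'\cap U=Re$), and there is no reason for $U\cap V$ to be pure in $V$ or in $E$.

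The paper's route for $(1)\Rightarrow(2)$ is genuinely different from yours. After the replacement making $(0:x)^\sharp=P$, it does \emph{not} fix a single uniserial $V$ through $x$. Instead, for each $c\in(U:x)\setminus(0:x)$ it uses purity of $U$ itself to pick $e_c\in U$ with $ce_c=cx$, takes $E_c\subseteq E$ an injective hull of $Re_c$ (automatically indecomposable, and containing $x$), and runs the Theorem~\ref{T:main} computation there to obtain $(Re_c:x)^\sharp\ne P$. Since $(U:x)=\bigcup_c(Re_c:x)$, this forces $(U:x)^\sharp\subseteq P_1\subsetneq P$, whence $(0:a)\subsetneq(U:x)$ and Lemma~\ref{L:crucial} finishes. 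The key difference is that the paper lets the indecomposable injective vary with $c$, which avoids the purity problem for $U\cap V$ that blocks your single-$V$ approach.
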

\begin{proof} $(2)\Rightarrow (1)$. By Lemma \ref{L:puruni} each indecomposable injective module $E$ for which $E^{\sharp}=P$ contains a pure submodule. We conclude by Corollary \ref{C:Z=P}.

$(1)\Rightarrow (2)$. We may assume that $R$ is not coherent and $E$ is the injective hull of $U$. Then $E$ is a finite direct sum of indecomposable injective modules. So, it is easy to show that $E=F\oplus G$ where $F_{\sharp}=P$ and $G_{\sharp}=L\subset P$. If $F=0$ then $E$ and $U$ are modules over $R_L$ which is coherent by \cite[Theorem 11]{Couch03}. In this case $E/U$ is FP-injective. Now, $F\ne 0$. Let $a\in R$ and $x\in E\setminus U$ such that $(0:a)\subseteq (U:x)$. We have $x=y+z$ where $y\in F$ and $z\in G$. By \cite[Lemma 26]{Couch03} $(0:y)^{\sharp}=P$ and $(0:z)^{\sharp}\subseteq L$. It is obvious that $(0:x)=(0:y)\cap (0:z)$. So, it is possible that $(0:x)^{\sharp}\subseteq L$. Let $B$ be the kernel of the natural map $R\rightarrow R_L$. For any $s\in P\setminus L$ we have $(0:P)\subset (0:s)\subseteq B\subseteq (0:x)$. By \cite[Corollary 28]{Couch03} there exists $f\in F$ such that $(0:f)\subset (0:x)$. There exists $b\in R$ such that $0\ne bf\in U$. Since $U$ is a pure submodule there exists $u\in U$ such that $bf=bu$. By \cite[Lemma 2]{Couch03} $(0:u)=(0:f)$. It is obvious that $(U:x+u)=(U:x)$ and we have $(0:x+u)=(0:u)$ and $(0:x+u)^{\sharp}=P$. So, after possibly replace $x$ with $x+u$, we may assume that $(0:x)^{\sharp}=P$. For any $c\in (U:x)\setminus (0:x)$ let $e_c\in U$ such that $ce_c=cx$. Then $E$ contains an injective hull $E_c$ of $Re_c$, and clearly $x\in E_c$. So, we do as in the proof of Theorem \ref{T:main} to show that $(Re_c:x)^{\sharp}\ne P$ for any $c\in (U:x)\setminus (0:x)$. It is obvious that $(U:x)=\cup_{c\in (U:x)\setminus (0:x)}(Re_c:x)$. It follows that $(U:x)^{\sharp}\subseteq\cup_{c\in (U:x)\setminus (0:x)}(Re_c:x)^{\sharp}\subseteq P_1$. We conclude as in the proof of Theorem \ref{T:main}.
\end{proof}

\begin{theorem}
\label{T:main2} Let $R$ be a chain ring. Assume that $Z\ne Z_1$ where $Z_1$ is the union of all prime ideals properly contained in $Z$. Suppose $R/Z_1$ is almost maximal. Then, for each FP-injective $R$-module $E$ and for any its pure submodule $U$ of finite Goldie dimension, $E/U$ is FP-injective.
\end{theorem}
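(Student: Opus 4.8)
The plan is to reduce to the case $Z=P$, which is settled by Theorem \ref{T:Z=P1}. By Proposition \ref{P:redu} we may assume that $E$ is the injective hull of $U$; then $E$ is a finite direct sum of indecomposable injective modules, and $U$ stays pure in $E$, so $E_{\sharp}$ is a prime ideal contained in $Z$ (Lemma \ref{L:topbot}). If $E_{\sharp}\subsetneq Z$ then $E/U$ is FP-injective by Proposition \ref{P:main}, so we may assume $E_{\sharp}=Z$. Then $E$ and $U$ are modules over $Q=R_{Z}$, $U$ remains pure in $E$ over $Q$, and — arguing exactly as in the proof of Theorem \ref{T:main}, where one freely passes between $R$ and a localization $R_{L}$ — it suffices to show that $E/U$ is FP-injective over $Q$. (That FP-injectivity over $Q$ implies FP-injectivity over $R$ is checked directly: a finitely presented $R$-module is a direct sum of modules $R/aR$, and one verifies the divisibility criterion for $\overline{x}\in E/U$ and $a\in R$ separately according as $a\notin Z$, where $aE=E$ because $E$ is a $Q$-module, or $a\in Z$, where one uses $(0:_{Q}a)=(0:_{R}a)Q$ and the fact that $(U:x)$ is already a $Q$-submodule of $Q$.)

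So assume $E_{\sharp}=Z$ and work over $Q$. Now $Q$ is a chain ring whose set of zerodivisors is its maximal ideal, so ``$Z=P$'' holds for $Q$; the union of its nonmaximal prime ideals is $Z_{1}Q$, which is properly contained in the maximal ideal of $Q$ exactly because $Z\ne Z_{1}$. If $Q$ is coherent, $E/U$ is FP-injective over $Q$ at once. Otherwise Theorem \ref{T:Z=P1} applies provided $Q/Z_{1}Q$ is almost maximal; since $Q/Z_{1}Q=R_{Z}/Z_{1}R_{Z}$ is the localization of the chain ring $R/Z_{1}$ at the prime $Z/Z_{1}$, and $R/Z_{1}$ is almost maximal by hypothesis, this follows from the fact that a localization of an almost maximal chain ring at a prime ideal is again almost maximal. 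Thus Theorem \ref{T:Z=P1} gives that $E/U$ is FP-injective over $Q$, hence over $R$.

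The main obstacle is precisely the step where the hypothesis is used, namely the assertion that $Q/Z_{1}Q$ is almost maximal, i.e.\ that localization at a prime preserves almost maximality for chain rings; this is the point to pin down carefully (or to cite). If one prefers to avoid invoking it, the alternative is to transcribe directly the argument of Theorem \ref{T:Z=P1}, $(1)\Rightarrow(2)$, inside $R$: one uses Lemma \ref{L:puruni} (valid because $Z\ne Z_{1}$) to put each indecomposable summand of $E$ with bottom prime $Z$ inside a pure uniserial submodule, replaces $x$ by $x+u$ with $u\in U$ so that $(0:x)^{\sharp}=Z$ (via \cite[Corollary 28]{Couch03}), writes $(U:x)=P(A:I)$ in terms of a breadth ideal with $A=(0:x)$ as in Theorem \ref{T:main}, and then, given $x\notin U$ and $a\in R$ with $(0:a)\subseteq(U:x)$, must show $(0:a)\ne(U:x)$; here ``$R/Z_{1}$ almost maximal'' forces $R/I$ to be complete in its f.c.\ topology (in the remaining case it is a linearly compact factor of $R/Z_{1}$), so no bad breadth element exists, and Lemma \ref{L:crucial} yields $x\in U+aE$. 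Everything else is a routine adaptation of the proofs of Theorems \ref{T:main} and \ref{T:Z=P1}.
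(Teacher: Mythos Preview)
Your primary reduction has a genuine gap. You claim that Lemma \ref{L:topbot} gives $E_{\sharp}\subseteq Z$, but that lemma only says $E^{\sharp}\subseteq Z\cap E_{\sharp}$: it is the \emph{top} prime $E^{\sharp}$ that lies below $Z$, not the bottom prime $E_{\sharp}$. In fact $E_{\sharp}$ can strictly contain $Z$ (for instance $E(R/P)_{\sharp}=P$), and this is precisely the case that carries the content of the theorem. When $E_{\sharp}\supsetneq Z$ there exist $s\notin Z$ and $0\ne y\in E$ with $sy=0$, so multiplication by $s$ is not injective on $E$ and $E$ is \emph{not} a $Q$-module; your passage to $Q$ and appeal to Theorem \ref{T:Z=P1} therefore does not cover this case. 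Proposition \ref{P:main} disposes of $E_{\sharp}\subset Z$, but the remaining situation is $Z\subseteq E_{\sharp}$, not $E_{\sharp}=Z$.

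The paper handles this exactly the way your ``alternative'' paragraph gestures at, but with the correct localization: one replaces $R$ by $R_{L}$ where $L=E_{\sharp}$ (as in the proof of Theorem \ref{T:main}), so that $E_{\sharp}=P$ in the new ring, while $Z$ sits strictly below $P$ (the case $Z=P$ being Theorem \ref{T:Z=P1} itself). There is then one extra wrinkle absent from Theorem \ref{T:Z=P1}: because $Z\subset P$, it can happen that $(0:x)=0$ for $x\in E\setminus U$; the paper fixes this by choosing $b$ with $0\ne bx\in U$, taking $v\in U$ with $bx=bv$ by purity, and replacing $x$ by $x-v$ so that $(0:x)\ne 0$ while $(U:x)$ is unchanged. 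After that, the argument of Theorem \ref{T:Z=P1} goes through verbatim to force $(0:a)\subsetneq(U:x)$ and Lemma \ref{L:crucial} finishes. Your alternative sketch is close to this, but the target of the adjustment $x\mapsto x+u$ should be $(0:x)^{\sharp}=P$ (i.e.\ $=E_{\sharp}$), not $(0:x)^{\sharp}=Z$; this again reflects the $E_{\sharp}$ versus $Z$ confusion.
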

\begin{proof}
We may assume that $R$ is not coherent and $E$ is the injective hull of $U$. By Theorem \ref{T:Z=P1} we suppose that $Z\ne P$. As in the proof of Theorem \ref{T:main} we may assume that $E_{\sharp}=P$. Let $a\in R$ and $x\in E\setminus U$ such that 
$(0:a)\subseteq (U:x)$. It is possible that $(0:x)=0$. But, there exists $b\in R$ such that $0\ne bx\in U$, and since $U$ is a pure submodule there exists $v\in U$ such that $bx=bv$. We get $(0:x-v)\ne 0$ and $(U:x-v)=(U:x)$. Now we do the same proof as in Theorem \ref{T:Z=P1} to conclude.
\end{proof}

\begin{corollary}
\label{C:Arch} Let $R$ be an Archimedean chain ring. The following conditions are equivalent:
\begin{enumerate}
\item $R$ is either coherent or maximal;
\item for each FP-injective $R$-module $E$  and for each pure submodule $U$ of finite Goldie dimension of $E$, $E/U$ is FP-injective.
\item for each injective $R$-module $E$  and for each pure submodule $U$ of finite Goldie dimension of $E$, $E/U$ is injective.
\end{enumerate}
\end{corollary}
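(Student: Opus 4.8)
The plan is to exploit the dichotomy available for an Archimedean chain ring $R$: since $\mathrm{Spec}\,R=\{0,P\}$, the set $Z$ of zerodivisors is either $0$ (so that $R$ is a valuation domain) or $P$, and in both cases the auxiliary primes $Z_1$ and $P_1$ occurring in Theorems \ref{T:main2} and \ref{T:Z=P1} are $0$. The trivial case in which $R$ is a field is settled at once, so assume $P\neq 0$.

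I would first establish $(1)\Leftrightarrow(2)$. If $Z=0$ then $R$ is a valuation domain, hence coherent, so $(1)$ holds trivially; and $(2)$ holds because over any coherent ring a factor of an FP-injective module modulo a pure submodule is FP-injective (quoted in the Introduction). If $Z=P$ then $P\neq P_1=0$ and $Z\neq Z_1=0$, so Lemma \ref{L:puruni} applies and provides, for every indecomposable injective module $E$ with $E_\sharp=P$, a pure uniserial submodule; Theorem \ref{T:Z=P1} then gives $(2)\Leftrightarrow(\textrm{$R$ coherent or $R/P_1=R$ almost maximal})$. At this point I would observe that for $Z=P\neq 0$ ``almost maximal'' already forces ``maximal'': choosing $0\neq a\in Z$ one has $(0:a)\neq 0$ and $Ra\cong R/(0:a)$, so almost maximality makes both $Ra\cong R/(0:a)$ and $R/Ra$ linearly compact, whence $R$ itself is linearly compact, i.e. maximal. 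Combining the two cases identifies $(2)$ with $(1)$.

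For $(2)\Leftrightarrow(3)$, the implication $(3)\Rightarrow(2)$ is immediate: injective modules are FP-injective, and by Proposition \ref{P:redu} one only needs to test $(2)$ on the injective hull $E$ of $U$, where $(3)$ applies directly. For $(2)\Rightarrow(3)$ I would use the already proved $(2)\Rightarrow(1)$: if $R$ is maximal, then every FP-injective module is injective, so the FP-injectivity of $E/U$ supplied by $(2)$ is injectivity; if $R$ is coherent, then $(2)$ still yields $E/U$ FP-injective, and the remaining task is to upgrade this to injectivity, for instance by showing $E/U$ is pure-injective (a pure-injective FP-injective module is a direct summand of its injective hull, hence injective), using the injectivity of $E$ over the chain ring $R$ together with the finite Goldie dimension of $U$.

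The main obstacle is exactly this last step — the promotion of FP-injectivity to genuine injectivity for $E/U$ in the coherent (possibly non-maximal, non-Noetherian) case of $(2)\Rightarrow(3)$; the rest of the argument reduces cleanly to Theorems \ref{T:Z=P1} and \ref{T:main2}, the coherence characterization recalled in the Introduction, and the elementary fact that a non-domain almost maximal chain ring is maximal.
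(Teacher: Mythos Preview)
Your treatment of $(1)\Leftrightarrow(2)$ and of $(3)\Rightarrow(2)$ is correct and follows the paper's route through Theorem~\ref{T:Z=P1} and Proposition~\ref{P:redu}; the explicit handling of the domain case $Z=0$ and of the passage from ``almost maximal'' to ``maximal'' when $Z=P\ne 0$ is a useful clarification that the paper leaves implicit.

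The real issue is $(1)\Rightarrow(3)$, and here \emph{both} branches of your argument are incomplete, not only the coherent one you single out. In the maximal branch your claim that ``every FP-injective module is injective'' is false: that property characterises Noetherian rings, and a maximal Archimedean chain ring with $Z=P$ is typically non-Noetherian, so the FP-injectivity of $E/U$ furnished by $(2)$ does not by itself yield injectivity. The paper does not pass through FP-injectivity of $E/U$ at all in this case: after reducing via Proposition~\ref{P:redu} to $E$ injective of finite Goldie dimension, it invokes Gill's theorem \cite{Gil71} that over a maximal chain ring such an $E$ is a finite direct sum of uniserial modules, and then \cite[Theorem~XII.2.3]{FuSa01} to conclude that the pure submodule $U$ is a direct summand of $E$, whence $E/U$ is itself a summand of $E$ and therefore injective. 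In the coherent branch, which you rightly flag as unfinished, the paper closes the gap by appealing to \cite[Lemma~3]{Cou05}; your pure-injectivity idea points in a reasonable direction but is not carried out, and the paper does not provide a self-contained argument either.
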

\begin{proof}
$(1)\Rightarrow (2)\ \mathrm{and}\ (3)$. By Proposition \ref{P:redu} we may assume that $E$ is injective of finite Goldie dimension. If $R$ is maximal then $E$ is a finite direct sum of uniserial modules by \cite[Theorem]{Gil71}. By \cite[Theorem XII.2.3]{FuSa01} (this theorem holds even if $R$ is not a domain) $U$ is a direct summand of $E$. So, $U$ and $E/U$ are injective. If $R$ is coherent we apply \cite[Lemma 3]{Cou05}.

$(2)\Rightarrow (1)$ by Theorem \ref{T:Z=P1}. 
\end{proof}

\begin{corollary}
\label{C:locArch} Let $R$ be an arithmetical ring such that $R_P$ is Archimedean for any maximal ideal $P$ of $R$. Then the following conditions are equivalent:
\begin{enumerate}
\item $R_P$ is either coherent or maximal for each maximal ideal $P$ of $R$;
\item for each FP-injective $R$-module $E$  and for each pure submodule $U$ of finite Goldie dimension of $E$, $E/U$ is FP-injective;
\item for each injective $R$-module $E$  and for each pure submodule $U$ of finite Goldie dimension of $E$, $E/U$ is injective.
\end{enumerate}
\end{corollary}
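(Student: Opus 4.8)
The strategy is to reduce the whole statement to its local version, Corollary \ref{C:Arch}, by localizing at the maximal ideals. I will use freely the following facts about a localization $R\to S^{-1}R$ (a flat epimorphism). For an $S^{-1}R$-module $N$ one has $\mathrm{Hom}_R(M,N)\cong\mathrm{Hom}_{S^{-1}R}(S^{-1}M,N)$ naturally in the $R$-module $M$; since $M\mapsto S^{-1}M$ is exact, a module injective over $S^{-1}R$ is injective over $R$, and passing to $\mathrm{Ext}^1_R$ (using flatness) a module FP-injective over $S^{-1}R$ is FP-injective over $R$. Conversely, for a finitely presented $R$-module $F$ and any $R$-module $N$, $S^{-1}\mathrm{Ext}^1_R(F,N)\cong\mathrm{Ext}^1_{S^{-1}R}(S^{-1}F,S^{-1}N)$, and every finitely presented $S^{-1}R$-module has the form $S^{-1}F$; hence $N$ is FP-injective over $R$ iff $N_P$ is FP-injective over $R_P$ for every maximal ideal $P$. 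Finally, the $R$-submodules of an $S^{-1}R$-module coincide with its $S^{-1}R$-submodules, so finite Goldie dimension is unaffected by such a change of ring, and purity is preserved under localization.

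For $(3)\Rightarrow(2)$: if $E$ is FP-injective and $U\subseteq E$ is pure of finite Goldie dimension, then $U$ is FP-injective, hence pure in its injective hull $E_U$, which is injective of finite Goldie dimension; by $(3)$, $E_U/U$ is injective, in particular FP-injective, so $E/U$ is FP-injective by Proposition \ref{P:redu}. For $(2)\Rightarrow(1)$: fix a maximal ideal $P$; since $R$ is arithmetical and $R_P$ Archimedean, $R_P$ is an Archimedean chain ring. Given an $R_P$-module $E'$ FP-injective over $R_P$ and $U'\subseteq E'$ pure of finite Goldie dimension over $R_P$, the facts above show these properties hold over $R$; by $(2)$, $E'/U'$ is FP-injective over $R$, hence over $R_P$. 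Thus $R_P$ satisfies condition $(2)$ of Corollary \ref{C:Arch}, so $R_P$ is coherent or maximal.

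For $(1)\Rightarrow(3)$: splitting off an injective complement of the injective hull of $U$, we may assume $E$ is the injective hull of $U$, so $E=I_1\oplus\dots\oplus I_n$ with each $I_k$ indecomposable injective, having local endomorphism ring and associated prime $P_k$, and $I_k$ a module over $R_{P_k}$. Choose maximal ideals $M_k\supseteq P_k$ and put $R'=S^{-1}R$ with $S=R\setminus(M_1\cup\dots\cup M_n)$; then $R'$ is semilocal and arithmetical, its localizations at maximal ideals occur among the $R_{M_k}$ and are therefore Archimedean and coherent-or-maximal, $E$ is injective and $U$ pure of finite Goldie dimension over $R'$, and injectivity of $E/U$ over $R'$ implies it over $R$. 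So we may assume $R$ semilocal, with maximal ideals $M^{(1)},\dots,M^{(r)}$. The plan is to split $E=E^{(1)}\oplus\dots\oplus E^{(r)}$ by grouping the summands $I_k$ according to their chosen maximal ideal; each $E^{(j)}$ is then a module, and in fact an injective module, over $R_{M^{(j)}}$ (each constituent $I_k$ is injective over $R_{P_k}$, hence over $R_{M^{(j)}}$). If $U$ respects this splitting, i.e.\ $U=\bigoplus_j(U\cap E^{(j)})$, then $E/U\cong\bigoplus_j E^{(j)}/(U\cap E^{(j)})$; each $U\cap E^{(j)}$ is pure of finite Goldie dimension in $E^{(j)}$, and (once it is seen to be an $R_{M^{(j)}}$-submodule) Corollary \ref{C:Arch} applied over the Archimedean chain ring $R_{M^{(j)}}$ shows that $E^{(j)}/(U\cap E^{(j)})$ is injective over $R_{M^{(j)}}$, hence over $R$; a finite direct sum of injectives is injective.

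The main obstacle is exactly this compatibility $U=\bigoplus_j(U\cap E^{(j)})$, together with the stability of $U\cap E^{(j)}$ under $R_{M^{(j)}}$: it amounts to saying that over the semilocal ring $R'$ each indecomposable injective summand of $E$ is ``concentrated'' at a single maximal ideal in a way respected by $U$. For the summands with maximal associated prime this is immediate, since the annihilator of each of their elements lies in a single maximal ideal, so there is no ambiguity in assigning them. The summands with non-maximal associated prime are the delicate ones — they cannot be assigned canonically to a maximal ideal — but they assemble into a module over a localization $R'_L$ with $L$ a union of non-maximal primes, which is contained in the zero-divisor ideal and hence coherent by \cite[Theorem 11]{Couch03}; over $R'_L$ the corresponding quotient of $E$ modulo (the corresponding part of) $U$ is FP-injective, and one checks it is injective as in Theorems \ref{T:Z=P1} and \ref{T:main2}. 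Gluing these two contributions — and, above all, verifying that the decomposition of $E$ drags the pure submodule $U$ along with it — is where the real content of the proof lies; purity of $U$ is what ultimately forces the compatibility.
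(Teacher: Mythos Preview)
Your implications $(3)\Rightarrow(2)$ and $(2)\Rightarrow(1)$ are correct and essentially self-contained; the paper instead closes the cycle as $(3)\Rightarrow(1)\Rightarrow(2)\Rightarrow(3)$, but your two steps are fine.

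The genuine gap is in your $(1)\Rightarrow(3)$. After semilocalizing you want to split $E=\bigoplus_j E^{(j)}$ by maximal ideals and have $U=\bigoplus_j(U\cap E^{(j)})$. You yourself flag this as ``the main obstacle'' and ``where the real content of the proof lies'', and then offer only the assertion that ``purity of $U$ is what ultimately forces the compatibility''. That is not an argument: purity does not, in general, make a submodule respect a direct-sum decomposition of the ambient module, and nothing specific to this situation has been invoked to force it. Your fallback for the ``non-maximal associated prime'' summands --- passing to a further localization $R'_L$ that is coherent --- is also only sketched, and does not obviously yield \emph{injectivity} of the quotient, only FP-injectivity; the appeal to ``as in Theorems \ref{T:Z=P1} and \ref{T:main2}'' is not a proof. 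So as written, $(1)\Rightarrow(3)$ is incomplete.

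The paper avoids this decomposition problem entirely. For $(1)\Rightarrow(2)$ it checks FP-injectivity of $E/U$ maximal-ideal-by-maximal-ideal, using the nontrivial input \cite[Corollary 4]{Cou09} that over an arithmetical ring $E_P$ stays injective; then $(E/U)_P\cong E_P/U_P$ with $U_P$ pure, and Corollary \ref{C:Arch} (or coherence) applies directly to the local ring $R_P$ --- no splitting of $U$ is needed. For $(2)\Rightarrow(3)$ the paper semilocalizes as you do, but then exploits structure rather than decomposition: by \cite[Theorem 5]{Jen66} the semilocal arithmetical ring is B\'ezout, and via \cite[Corollary 36]{Couch03} every ideal is countably generated, which lets one run the argument of \cite[Lemma 3]{Cou05} to get injectivity of $E/U$. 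You are missing both of these ingredients.
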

\begin{proof}
By Corollary \ref{C:Arch} $(3)\Rightarrow (1)$. 

$(1)\Rightarrow (2)$. We may assume that $E$ is injective of finite Goldie dimension. By \cite[Corollary 4]{Cou09} $E_P$ is injective, and $U_P$ is a pure submodule of $E_P$. We must prove that $(E/U)_P$ is FP-injective for each maximal ideal $P$ of $R$. If $R_P$ is coherent it is a consequence of Proposition \ref{P:locoh}. If $R_P$ is maximal and non coherent, first we show that $E_P$ is a finite direct sum of indecomposable injective $R_P$-modules. We may assume that $E$ is indecomposable. Since $\mathrm{End}_R(E)$ is local, there exists a maximal ideal $L$ such that $E$ is a module over $R_L$. If $L=P$ then $E_P=E$. If $L\ne P$ then $E_P=0$ because $P$ is also a minimal prime ideal. By Corollary \ref{C:Arch} $(E/U)_P$ is FP-injective. We conclude that $E/U$ is FP-injective.

$(2)\Rightarrow (3)$. We have $E=E_1\oplus\dots\oplus E_n$ where $E_k$ is indecomposable for $k=1,\dots,n$. For $k=1,\dots,n$, let $P_k$ be the maximal ideal of $R$ which verifies that $E_k$ is a module over $R_{P_k}$. If $S=R\setminus (P_1\cup\dots\cup P_n)$, then $E$ and $U$ are modules over $S^{-1}R$. So, we replace $R$ with $S^{-1}R$ and we assume that $R$ is semilocal. By \cite[Theorem 5]{Jen66} each ideal of $R$ is principal ($R$ is B\'ezout). By using \cite[Corollary 36]{Couch03} it is easy to prove that each ideal of $R$ is countably generated. So, we can do the same proof as in \cite[Lemma 3]{Cou05} to show that $E/U$ is injective.
\end{proof}

\section*{Acknowledgements}

This work was presented at the "International Workshop on Algebra and Applications" held in Fes-Morocco, june 18-21, 2014. I thank again the organizers of this workshop. I thank too the laboratory of mathematics Nicolas Oresme of the university of Caen Basse-Normandie which allowed me to participate to this workshop.



\end{document}